\newcommand{\Zitkovic}[1]{{\v Z}itkovi\'c}
\newcommand{\Sirbu}[1]{S\^\i rbu}
\numberwithin{equation}{section}
\theoremstyle{plain}                
\newtheorem{theorem}{Theorem}[section]
\newtheorem{lemma}[theorem]{Lemma}
\newtheorem{proposition}[theorem]{Proposition}
\newtheorem{corollary}[theorem]{Corollary}
\theoremstyle{definition}           
\theoremstyle{remark}
\newtheorem{remark}[theorem]{Remark}
\newcommand{\proref}[1]{Proposition~\ref{#1}}
\newcommand{\lemref}[1]{Lemma~\ref{#1}}
\DeclareMathOperator{\pre}{pre}
\DeclareMathOperator{\post}{post}
\DeclareMathOperator{\Mer}{Mer}
\DeclareMathOperator{\textnonit}{text}
\begin{document}

\title{Impact Of Income And Leisure On Optimal Portfolio, Consumption, Retirement Decisions Under Exponential Utility}

\author{Tae Ung Gang$^{\ast}$}
\thanks{$^{\ast}$Stochastic Analysis and Application Research Center, Korea Advanced Institute of Science and Technology (email: gangtaeung@kaist.ac.kr).}
\author{Yong Hyun Shin$^{\ast\ast}$}
\thanks{$^{\ast\ast}$Department of Mathematics \& Research Institute of Natural Sciences, Sookmyung Women's University (email: yhshin@sookmyung.ac.kr).}

\maketitle

\begin{abstract}
We study an optimal control problem encompassing investment, consumption, and retirement decisions under exponential (CARA-type) utility. The financial market comprises a bond with constant drift and a stock following geometric Brownian motion. The agent receives continuous income, consumes over time, and has the option to retire irreversibly, gaining increased leisure post-retirement compared to pre-retirement. The objective is to maximize the expected exponential utility of weighted consumption and leisure over an infinite horizon. Using a martingale approach and dual value function, we derive implicit solutions for the optimal portfolio, consumption, and retirement time. The analysis highlights key contributions: first, the equivalent condition for no retirement is characterized by a specific income threshold; second, the influence of income and leisure levels on optimal portfolio, consumption, and retirement decisions is thoroughly examined. These results provide valuable insights into the interplay between financial and lifestyle choices in retirement planning.
\end{abstract}


Key words: optimization problems, CARA utility, retirement decision, portfolio, consumption-leisure


\section{Introduction} \label{sec:introduction}

After the pioneering work of Merton \cite{M1, M2}, continuous-time optimal consumption and investment problems have become one of the central areas of mathematical finance, extensively studied under various economic conditions. One of the most significant themes in this field is the voluntary retirement problem, coupled with optimal portfolio selection (Choi and Shim \cite{CS}; Farhi and Panageas \cite{FP}; Choi et al. \cite{CSS}; Dybvig and Liu \cite{DL} etc.). These studies typically address the problem as an irreversible decision, formulated mathematically as a free boundary problem. Notably, most of the existing research has focused on power-type utility functions, including the Cobb-Douglas utility.

In this paper, we aim to advance the field by analyzing the optimization problem using a combination of exponential-type (CARA) utility and Cobb-Douglas utility. To the best of our knowledge, no prior literature has examined voluntary retirement decisions involving CARA utility, making this study a novel contribution to the area.

When addressing continuous-time optimal consumption and investment problems, researchers commonly rely on two mathematical approaches. The first approach involves deriving the Hamilton-Jacobi-Bellman (HJB) equation and obtaining the solution, known as the dynamic programming method (Karatzas et al. \cite{KLSS}). The second approach employs the Lagrangian method to obtain a dual solution, referred to as the dual/martingale method (Karatzas et al. \cite{KLS} and Cox and Huang \cite{CH}). In this study, we adopt the dual approach, integrating it with the optimal stopping problem framework (Karatzas and Wang \cite{KW}). This combination allows us to effectively model and analyze the decision-making process.

Our benchmark model builds upon the foundational work of Farhi and Panageas \cite{FP}. In their study, the authors considered an optimization problem involving power-type Cobb-Douglas utility, where the optimal leisure was represented by different constants before and after retirement. Inspired by their approach, we extend the analysis to include exponential-type Cobb-Douglas utility, where optimal leisure is characterized by two constants, $\underline{L} > 0$ before retirement and $\overline{L} > 0$ after retirement, with $\underline{L} < \overline{L}$. This setup enables us to examine the optimal solutions, particularly focusing on how labor income $Y > 0$ influences the voluntary retirement wealth threshold $\bar{x}$.

This paper makes two main contributions. First, we derive an equivalent condition under which individuals never retire from labor. This condition is expressed as an inequality, requiring that income $Y$ should exceed a specific threshold determined by the consumption weight parameter $\alpha$ and the difference in leisure, $\overline{L} - \underline{L}$. This result provides a clear economic interpretation and offers insights into how income and leisure preferences influence retirement decisions.

Second, we analytically demonstrate how optimal consumption, portfolio choices, and retirement timing are influenced by labor income $Y$, pre-retirement leisure $\underline{L}$, and post-retirement leisure $\overline{L}$. Specifically, we show that the retirement wealth threshold $\bar{x}$ increases with respect to $Y$ and $\underline{L}$ but decreases with respect to $\overline{L}$. Furthermore, we find that post-retirement optimal consumption and portfolio choices are independent of $(Y, \underline{L}, \overline{L})$, and pre-retirement optimal consumption and portfolio choices show opposite tendency of monotonicity in $\underline{L}, \overline{L}$. To be more specific, pre-retirement optimal consumption is non-decreasing in $\underline{L}$ and non-increasing in $\overline{L}$ whereas pre-retirement optimal portfolio is non-increasing in $\underline{L}$ and non-decreasing in $\overline{L}$. These findings deepen our understanding of the interplay between labor income, leisure preferences, and financial decisions in the context of voluntary retirement.

The rest of this paper is organized as follows: We introduce our model in Section \ref{sec:model}. Section \ref{sec:optimization} provides an optimization problem. We analyze the value function for post-retirement in Section \ref{sec:post_value} and then return to the optimization problem in Section \ref{sec:original}. In Section \ref{sec:solution}, we derive a solution to free boundary value problem. We recall a benchmark case (Merton problem) in Section \ref{sec:merton}, and provide verification and summary in Section \ref{sec:Verification}. In Section \ref{sec:properties}, we analyze some properties of the optimal plan and reach a conclusion in Section \ref{sec:conclusions}.


\section{Model} \label{sec:model}

Under the filtered probability space $(\Omega, \mathcal{F}, (\mathcal{F}_{t})_{t \geq 0}, \mathbb{P})$, which satisfies the usual conditions, we define a standard Brownian motion \((B_{t})_{t \geq 0}\). We consider a financial market comprising one bond and one stock, whose prices evolve according to the following ordinary differential equation (ODE) and stochastic differential equation (SDE), respectively:  
\begin{align*}
  d S_{t}^{(0)} = r\,S_{t}^{(0)} d t, \qquad d S_{t} = S_{t} (\mu d t + \sigma d B_{t}),
\end{align*}
where $r,~\mu$, and $\sigma$ are constants. Here, $r>0$ represents the risk-free rate, $\mu>r$ is the stock's drift rate, and $\sigma>0$ denotes the stock's volatility.

An agent in this market chooses a consumption rate $(c_{t})_{t \geq 0}$, which is non-negative and progressively measurable with respect to the filtration $(\mathcal{F}_{t})_{t \geq 0}$ and allocates $(\pi_{t})_{t \geq 0}$ to the stock at time $t$. The agent also earns a constant labor income $Y > 0$ per unit of time until retirement. The retirement decision is modeled as a one-time irreversible choice, represented by the stopping time $\tau$, which is $\mathcal{F}_{t}$-measurable. Once retired, the agent cannot return to work.

Beyond financial considerations, the agent derives utility from both consumption and leisure, and $(l_{t})_{t \geq 0}$ denotes a leisure rate, which differs before and after retirement. Specifically,  
\begin{align*}
  l_{t} = \begin{cases}
          \underline{L} \quad \text{ if } t < \tau, \\
          \overline{L} \quad \text{ if } t \geq \tau,
          \end{cases}
\end{align*}
where $0 < \underline{L} < \overline{L}$. This reflects the assumption that the agent experiences greater enjoyment from leisure after retirement.

The agent's consumption-leisure-portfolio plan is denoted by $(c, l, \pi)$. Let $X_{t} = X_{t}^{(c, l, \pi, x)}$ denote the agent's total wealth at time $t$ under the plan $(c, l, \pi)$, with initial wealth $X_{0} = x$. The wealth dynamics of the agent are given by the SDE:  
\begin{align} \label{X_dynamics}
  d X_{t} = \left\{ (\mu - r) \pi_{t} + r X_{t} - c_{t} + Y \cdot {\mathbf{1}}_{ \{ t \in [0, \tau) \}} \right\} d t + \sigma \pi_{t} d B_{t},
\end{align}
where the indicator function ${\mathbf{1}}_{\{t \in [0, \tau)\}}$ ensures that labor income $Y$ ceases upon retirement.

A consumption-leisure-portfolio plan $(c, l, \pi)$ is admissible if it satisfies the following conditions:  
\begin{equation} \label{admissible_condition}
  \begin{split}
  & \int_{0}^{t} c_{s} d s + \int_{0}^{t} \pi_{s}^{2} d s < \infty \quad \forall t \geq 0, \\
  & X_{t} \begin{cases}
          > - \tfrac{Y}{r} \quad \forall t \in [0, \tau), \\
          \geq 0 \quad \forall t \geq \tau.
          \end{cases}
  \end{split}
\end{equation}

The first condition ensures that consumption and the portfolio's squared allocation are integrable over any time horizon, guaranteeing finite wealth dynamics. The second condition imposes a borrowing constraint before retirement, restricting the agent's wealth to be greater than $-Y/r$, and requires non-negative wealth after retirement. These constraints reflect the agent's inability to sustain excessive borrowing or negative wealth in retirement.


\section{Optimization problem} \label{sec:optimization}

The agent seeks to determine an admissible consumption-leisure-portfolio plan \((c, l, \pi)\) that maximizes the following value function:  
\begin{align} \label{Original_problem}
  V(x) = \sup_{(c, \pi, \tau)} \mathbb{E}^{x} \left[ \int_{0}^{\infty} e^{- \beta s} U(c_{s}, l_{s}) d s \right],
\end{align}
where $\beta > 0$ is the subjective discount rate, $\mathbb{E}^{x}[\cdot]$ represents the conditional expectation generated by the initial condition $X_{0} = x$, and the utility function is defined as  
$$
U(c, l) := - \tfrac{e^{- \gamma^{*} \left\{\alpha c + (1 - \alpha) l\right\}}}{\alpha \gamma^{*}} = - \tfrac{e^{- \gamma c}}{\gamma} \cdot A(l).
$$
Here $\alpha \in (0, 1)$ is the weight parameter for consumption, $\gamma^{*} > 0$ is the agent's risk aversion, $\gamma := \alpha \gamma^{*}$, and $A(l) := e^{- \gamma^{*} (1 - \alpha) l}$. The objective reflects the agent's trade-off between consumption and leisure over time, discounted at the rate $\beta$.


\section{Value function for post-retirement} \label{sec:post_value}

For the post-retirement case, the wealth dynamics in \eqref{X_dynamics} reduce to  
\begin{align}
  d X_{t} & = \left\{ (\mu - r) \pi_{t} + r X_{t} - c_{t} \right\} d t + \sigma \pi_{t} d B_{t}, \label{X_dynamics_post}
\end{align}
where the condition $X_{t} \geq 0$ for all $t \geq 0$ should be satisfied. Let $\theta := (\mu - r)/\sigma > 0$ denote the Sharpe ratio, and define $H_{t} := e^{- \left( r + \frac{1}{2}\theta^{2}\right) t - \theta B_{t}} > 0$. Applying It\^{o}'s lemma to $H_{t} X_{t}$, we obtain  
\begin{align}
  \int_{0}^{t} H_{s} c_{s} d s + H_{t} X_{t} = x + \int_{0}^{t} H_{s} (\sigma \pi_{s} - \theta X_{s}) d B_{s}. \label{ito_for_HX_post}
\end{align}
The right-hand side of \eqref{ito_for_HX_post} is a bounded (from below) local martingale because the left-hand side is non-negative. By Fatou's lemma, the left-hand side in \eqref{ito_for_HX_post} is a supermartingale. Thus,  
\begin{align*}
  \mathbb{E}^{x} \left[ \int_{0}^{t} H_{s} c_{s} d s + H_{t} X_{t} \right] \leq x.
\end{align*}
If the condition $\lim_{t \to \infty} \mathbb{E} \left[ H_{t} X_{t} \right] = 0$ holds (to be verified later), then the monotone convergence theorem implies  
\begin{align}
  \mathbb{E}^{x} \left[ \int_{0}^{\infty} H_{s} c_{s} d s \right] \leq x. \label{Budget_constraint_post}
\end{align}

Let us define the post-retirement value function as  
\begin{align} \label{Post_value_define}
  V_{\post}(x) := \sup_{(c, \pi)} \mathbb{E}^{x} \left[ \int_{0}^{\infty} e^{- \beta s} \left( - \tfrac{e^{- \gamma c_{s}}}{\gamma} \cdot A(\overline{L}) \right) d s \right].
\end{align}

To solve \eqref{Post_value_define}, we apply the Lagrange multiplier method. For a fixed $y > 0$, the following inequality holds:  
\begin{equation}
  \begin{split}
  V_{\post}(x) - y x \leq \mathbb{E}^{y} \left[ \int_{0}^{\infty} e^{- \beta s} \overline{U} \left( y e^{\beta s} H_{s} \right) d s \right], \label{Lagrange}
  \end{split}
\end{equation}
where $\mathbb{E}^{y}[\cdot]$ denotes the conditional expectation given $y_{0} = y$. The inequality is derived using the budget constraint in \eqref{Budget_constraint_post}. The dual utility function \(\overline{U}(y)\) is defined as  
$$
\overline{U}(y) := \sup_{c \geq 0} \left( - \tfrac{e^{- \gamma c}}{\gamma} \cdot A(\overline{L}) - c y \right) = \tfrac{- y \ln \left( \frac{A(\overline{L})}{y} \right) - y}{\gamma} \cdot \mathbf{1}_{ \{ y \leq A(\overline{L}) \} } - \tfrac{A(\overline{L})}{\gamma} \cdot \mathbf{1}_{ \{ y > A(\overline{L}) \} }
$$
with the maximizer $c = \tfrac{1}{\gamma} \left( \ln \left( \frac{A(\overline{L})}{y} \right) \right)^{+}$, where the notation $(x)^{+} := \max(x, 0)$ ensures non-negativity. This result exploits the strict concavity of the map $c \mapsto - \tfrac{e^{- \gamma c}}{\gamma} \cdot A(\overline{L}) - c y$. Also we see that $\overline{U}$ is decreasing and convex.

Let us define the process  
\begin{align}
y_{t} := \lambda e^{\beta t} H_{t}, \quad \text{which evolves according to} \quad d y_{t} = y_{t} \big[ (\beta - r) dt - \theta d B_{t} \big]. \label{y_t}
\end{align} 
The dual function for the post-retirement case is given by  
\begin{align}
  \tilde{V}_{\post}(y) := \mathbb{E}^{y} \left[ \int_{0}^{\infty} e^{- \beta s} \overline{U}(y_{s}) d s \right] < 0. \label{Dual_function_post}
\end{align}
By the Feynman-Kac formula, the function \(\tilde{V}_{\post}(y)\) satisfies the ODE:  
\begin{align}
  \dfrac{1}{2}\theta^{2} y^{2}\, \tilde{V}_{\post}''(y) + (\beta - r) y\, \tilde{V}_{\post}'(y) - \beta\, \tilde{V}_{\post}(y) + \overline{U}(y) = 0 \label{ODE_post}
\end{align}
provided the conditions  
$$
\lim_{t \to \infty}  \mathbb{E} \big[e^{- \beta t}\tilde{V}_{\post}(y_{t})\big] = 0 \quad \text{and} \quad y_{t} \tilde{V}_{\post}'(y_{t}) \text{ is square integrable}
$$  
are satisfied. To derive this, we apply It\^{o}'s lemma to \(e^{-\beta t} \tilde{V}_{\post}(y_{t})\):  
\begin{align*}
  & d \left( e^{- \beta t} \tilde{V}_{\post}(y_{t}) \right) = e^{- \beta t} \left( \dfrac{1}{2}\theta^{2} y_{t}^{2} \tilde{V}_{\post}''(y_{t}) + (\beta - r) y_{t} \tilde{V}_{\post}'(y_{t}) - \beta \tilde{V}_{\post}(y_{t}) \right) d t - e^{- \beta t} \theta y_{t} \tilde{V}_{\post}'(y_{t}) d B_{t} \\
  & \qquad \qquad \qquad \quad \;\; = - e^{- \beta t} \overline{U}(y_{t}) d t - e^{- \beta t} \theta y_{t} \tilde{V}_{\post}'(y_{t}) d B_{t} \\
  & \Rightarrow \tilde{V}_{\post}(y) = \mathbb{E}^{y} \left[ \int_{0}^{t} e^{- \beta s} \overline{U}(y_{s}) d s \right] + e^{- \beta t} \mathbb{E}^{y} [\tilde{V}_{\post}(y_{t})] + \theta \mathbb{E}^{y} \left[ \int_{0}^{t} e^{- \beta s} y_{s} \tilde{V}'(y_{s}) d B_{s} \right] \xrightarrow[]{t \rightarrow \infty} \mathbb{E}^{y} \left[ \int_{0}^{\infty} e^{- \beta s} \overline{U}(y_{s}) d s \right].
\end{align*}

The quadratic equation  $f(m) := \theta^{2} m^{2} + (2 (\beta - r) - \theta^{2}) m - 2 \beta = 0$ has two real roots $m_{-}$ and $m_{+}$. Since $m \mapsto f(m)$ is strictly convex, $f(0) = - 2 \beta < 0$, and $f(1) = - 2 r < 0$, we get $m_{-} < 0 < 1 < m_{+}$. Furthermore, we can rewrite $\beta$ and $r$ using these roots:
\begin{align} \label{Relation_beta_r}
  \begin{cases}
  \beta = - \tfrac{\theta^{2}}{2} \cdot m_{+} m_{-}, \\
  r = - \tfrac{\theta^{2}}{2} \cdot (m_{+} - 1) (m_{-} - 1).
  \end{cases}
\end{align}
Now we solve the ODE \eqref{ODE_post} separately for the two regions $y \leq A(\overline{L})$ and $y > A(\overline{L})$ and use the above relation, we derive $C^{1}((0, \infty))$ solution 
\begin{equation}  \label{Post_value}
  \begin{split}
  \tilde{V}_{\post}(y) & = \begin{cases}
   \frac{m_{-} - 1}{\gamma r m_{+} (m_{+} - 1) (m_{+} - m_{-}) A(\overline{L})^{m_{+} - 1}} y^{m_{+}} + \left( \frac{\beta - r + \frac{\theta^{2}}{2}}{\gamma r^{2}} - \frac{\ln (A(\overline{L})) + 1}{\gamma r} \right) y + \frac{y \ln (y)}{\gamma r}, & \text{if } y \leq A(\overline{L}), \\
   \frac{m_{+} - 1}{\gamma r m_{-} (m_{-} - 1) (m_{+} - m_{-}) A(\overline{L})^{m_{-} - 1}} y^{m_{-}} - \frac{A(\overline{L})}{\gamma \beta}, & \text{if } y > A(\overline{L}),
   \end{cases} \\
  \tilde{V}_{\post}'(y) & = \begin{cases}
   \frac{m_{-} - 1}{\gamma r (m_{+} - 1) (m_{+} - m_{-})} \left( \tfrac{A(\overline{L})}{y} \right)^{1 - m_{+}} + \frac{\beta - r + \frac{\theta^{2}}{2}}{\gamma r^{2}} - \frac{\ln \left( \frac{A(\overline{L})}{y} \right)}{\gamma r}, & \text{if } y \leq A(\overline{L}), \\
   \frac{m_{+} - 1}{\gamma r (m_{-} - 1) (m_{+} - m_{-})} \left( \tfrac{A(\overline{L})}{y} \right)^{1 - m_{-}}, & \text{if } y > A(\overline{L}),
   \end{cases} \\
  y \tilde{V}_{\post}''(y) & = \begin{cases}
   \frac{m_{-} - 1}{\gamma r (m_{+} - m_{-})} \left( \tfrac{A(\overline{L})}{y} \right)^{1 - m_{+}} + \frac{1}{\gamma r}, & \text{if } y \leq A(\overline{L}), \\
   \frac{m_{+} - 1}{\gamma r (m_{+} - m_{-})} \left( \tfrac{A(\overline{L})}{y} \right)^{1 - m_{-}}, & \text{if } y > A(\overline{L}).
   \end{cases}
   \end{split}
\end{equation}
Let us now examine some key properties of $\tilde{V}_{\post}$ to support the verification argument. 
\begin{lemma} \label{Post_value_sign} {\ \\}
(i) $\tilde{V}_{\post} < 0$, $\tilde{V}_{\post}' < 0$, $\tilde{V}_{\post}'' > 0$. \\
(ii) $\tilde{V}_{\post} \in C^{3}((0, \infty))$. \\
(iii) $\displaystyle\sup_{y > 0} (|\tilde{V}_{\post}(y)| + |y \tilde{V}_{\post}'(y)| + |y^{2} \tilde{V}_{\post}''(y)|) < \infty$.
\end{lemma}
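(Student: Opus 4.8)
The plan is to read everything off the explicit formulas in \eqref{Post_value}, using only the root ordering $m_- < 0 < 1 < m_+$ and the relations \eqref{Relation_beta_r}. First I would record the signs of the building blocks: since $m_- - 1 < 0$, $m_+ - 1 > 0$, $m_+ - m_- > 0$, $m_- < 0 < m_+$, and $\gamma, r, \beta, \theta > 0$, the coefficient of $y^{m_+}$ in $\tilde{V}_{\post}$ (lower region) is negative while the coefficient of $y^{m_-}$ (upper region) is positive. I would also note, for use in (ii), that $\overline{U}$ is $C^1$, with $\overline{U}'(y) = \gamma^{-1}\ln(y/A(\overline{L}))\,\mathbf{1}_{\{y \le A(\overline{L})\}}$ matching $0$ across $y = A(\overline{L})$, but only $C^1$: $\overline{U}''$ jumps from $1/(\gamma A(\overline{L}))$ to $0$ there; and that $\overline{U} < 0$ on $(0,\infty)$.

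For (i) I would start from convexity, which is cleanest through the formula for $y\,\tilde{V}_{\post}''(y)$. On $\{y > A(\overline{L})\}$ positivity is immediate from the signs above. On $\{y \le A(\overline{L})\}$ I would use $A(\overline{L})/y \ge 1$ and $1 - m_+ < 0$ to get $(A(\overline{L})/y)^{1-m_+} \le 1$; multiplying the negative coefficient $\tfrac{m_- - 1}{\gamma r(m_+ - m_-)}$ by a factor in $(0,1]$ and adding $\tfrac{1}{\gamma r}$ yields $y\,\tilde{V}_{\post}''(y) \ge \tfrac{1}{\gamma r}\cdot\tfrac{m_+ - 1}{m_+ - m_-} > 0$, so $\tilde{V}_{\post}'' > 0$. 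Convexity makes $\tilde{V}_{\post}'$ strictly increasing; from the upper-region formula $\tilde{V}_{\post}'(y) \to 0$ as $y \to \infty$ (the exponent $1 - m_- > 0$ kills the power), so $0$ is the supremum of $\tilde{V}_{\post}'$ and hence $\tilde{V}_{\post}' < 0$ everywhere. Finally $\tilde{V}_{\post} < 0$ follows at once from the representation \eqref{Dual_function_post} together with $\overline{U} < 0$.

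For (ii), on each open interval $(0, A(\overline{L}))$ and $(A(\overline{L}),\infty)$ the formulas are $C^\infty$, so the only issue is the junction $y = A(\overline{L})$, where $C^1$ is already given. I would upgrade regularity by bootstrapping the ODE \eqref{ODE_post}: solving for $\tilde{V}_{\post}''$ expresses it through $\tilde{V}_{\post}$, $\tilde{V}_{\post}'$ (continuous) and $\overline{U}$ (continuous), giving $C^2$; differentiating \eqref{ODE_post} once then expresses $\tilde{V}_{\post}'''$ through $\tilde{V}_{\post}'$, $\tilde{V}_{\post}''$ (now continuous) and $\overline{U}'$ (continuous), giving $C^3$. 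Equivalently one checks directly that the one-sided limits of $\tilde{V}_{\post}''$ and of $\tilde{V}_{\post}'''$ at $A(\overline{L})$ agree and invokes the standard glueing principle. Regularity stops exactly here: since $\overline{U}$ is only $C^1$, differentiating \eqref{ODE_post} a second time would require the discontinuous $\overline{U}''$, so one cannot push beyond $C^3$ — consistent with the statement, and this is the only genuinely structural point of the lemma.

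For (iii), by (ii) the three maps $y \mapsto \tilde{V}_{\post}(y)$, $y \mapsto y\tilde{V}_{\post}'(y)$, $y \mapsto y^2\tilde{V}_{\post}''(y)$ are continuous on $(0,\infty)$, so it suffices to show each has finite limits as $y \to 0^+$ and $y \to \infty$. As $y \to 0^+$ (lower region) every term carries a positive power $y^{m_+}$ or $y$, or the product $y\ln y$, all tending to $0$, so all three quantities tend to $0$. As $y \to \infty$ (upper region) the power $y^{m_-}$ tends to $0$ since $m_- < 0$, giving $y\tilde{V}_{\post}'(y) \to 0$, $y^2\tilde{V}_{\post}''(y) \to 0$, and $\tilde{V}_{\post}(y) \to -A(\overline{L})/(\gamma\beta)$; finite limits at both ends plus continuity on the compact middle give a finite supremum. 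I expect the main obstacle to be bookkeeping rather than concept: the convexity bound on the lower region and the junction matching for $C^3$ require care, and one must track which exponent ($m_+ > 0$ near $0$, $m_- < 0$ near $\infty$) makes each power term vanish at the relevant end.
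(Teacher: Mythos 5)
Your proof is correct and takes essentially the same route as the paper's: convexity is read off the explicit formula for $y\,\tilde{V}_{\post}''(y)$ (your lower bound $\tfrac{1}{\gamma r}\cdot\tfrac{m_+-1}{m_+-m_-}$ is exactly the paper's value $h(A(\overline{L}))$), negativity of $\tilde{V}_{\post}'$ follows from convexity plus the sign in one region, and $C^3$ regularity comes from bootstrapping the ODE \eqref{ODE_post} with $\overline{U}\in C^1$, with (iii) via finite limits at $0^+$ and $\infty$. The only cosmetic differences are that the paper anchors $\tilde{V}_{\post}'<0$ at $\tilde{V}_{\post}'(A(\overline{L}))<0$ rather than at the limit at infinity, and deduces $\tilde{V}_{\post}<0$ from $\lim_{y\downarrow 0}\tilde{V}_{\post}(y)=0$ and monotonicity rather than from the stochastic representation \eqref{Dual_function_post}.
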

\begin{proof} {\ \\}
\noindent (i) Let us consider the function $h(y) := \frac{m_{-} - 1}{\gamma r (m_{+} - m_{-})} \left( \frac{A(\overline{L})}{y} \right)^{1 - m_{+}} + \frac{1}{\gamma r}$, which is strictly decreasing in $y$, with $h(0) = \tfrac{1}{\gamma r} > 0$ and $h(A(\overline{L})) = \tfrac{m_{+} - 1}{\gamma r(m_+-m_-)} > 0$. From this, we deduce that $\tilde{V}_{\post}''(y) > 0$ for all $y > 0$. Combining this with the fact that $\tilde{V}_{\post}'(A(\overline{L})) < 0$ and using \eqref{Post_value}, we conclude $\tilde{V}_{\post}'(y) < 0$ for all $y > 0$. Furthermore, since $\lim_{y \downarrow 0} \tilde{V}_{\post}(y) = 0$ and $\tilde{V}_{\post}'(y) < 0$ for all $y > 0$, it follows that $\tilde{V}_{\post}(y) < 0$ for all $y > 0$.

\noindent (ii) It is straightforward to verify that $\tilde{V}_{\post} \in C^{2}((0, \infty))$. Due to \eqref{ODE_post}, $\tilde{V}_{\post} \in C^{2}((0, \infty))$, and $\overline{U} \in C^{1}((0, \infty))$, we conclude that $\tilde{V}_{\post} \in C^{3}((0, \infty))$.

\noindent (iii) The finiteness of the supremum is a direct result of the boundedness of the terms.
\end{proof}
From the duality relation 
\begin{align*}
  V_{\post}(x) = \inf_{y > 0} \left(\tilde{V}_{\post}(y) + y x\right),
\end{align*}
we derive the first-order condition $X_{t} = - \tilde{V}_{\post}'(y_{t})$. The smoothness property \(\tilde{V}_{\post} \in C^{3}((0, \infty))\) allows us to apply It\^{o}'s lemma to obtain the dynamics of \(X_{t}\):
\begin{align*}
  d X_{t} = - \left( (\beta - r) y_{t} \tilde{V}_{\post}''(y_{t}) +\dfrac{1}{2}\theta^{2} y_{t}^{2} \tilde{V}_{\post}'''(y_{t}) \right) d t + \theta y_{t} \tilde{V}_{\post}''(y_{t}) d B_{t}.
\end{align*}
Comparing this with the dynamics of \(X_{t}\) in \eqref{X_dynamics_post}, we identify the portfolio as 
\begin{align*}
  \pi_{t} = \tfrac{\theta y_{t} \tilde{V}_{\post}''(y_{t})}{\sigma} > 0.
\end{align*}
\begin{lemma} \label{Post_vanish}
For the optimal total wealth $X_{t}^{\post} = - \tilde{V}_{\post}'(y_{t})$, we have\\
 $\lim_{t \rightarrow \infty} \mathbb{E}^{x} \left[ H_{t} X_{t}^{\post} \right] = 0$ and $\lim_{t \rightarrow \infty} \mathbb{E}^{x} [ e^{- \beta t} \tilde{V}_{\post}(y_{t}) ] = 0$.
\end{lemma}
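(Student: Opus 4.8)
The plan is to reduce both limits to the uniform bounds already recorded in \lemref{Post_value_sign}(iii), exploiting that the discount factor $\beta>0$ forces exponential decay. The only genuine input is the algebraic link between $H_t$ and $y_t$: from the definition \eqref{y_t}, $y_t=\lambda e^{\beta t}H_t$, so that $H_t=\lambda^{-1}e^{-\beta t}y_t$, where $\lambda>0$ is the constant fixed by the initial wealth through the first-order condition $x=-\tilde{V}_{\post}'(\lambda)$.

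First I would substitute $X_t^{\post}=-\tilde{V}_{\post}'(y_t)$ to write
\begin{align*}
  H_t X_t^{\post} = -\tfrac{1}{\lambda}\,e^{-\beta t}\,y_t\,\tilde{V}_{\post}'(y_t).
\end{align*}
By \lemref{Post_value_sign}(iii) there is a constant $M<\infty$ with $|y\,\tilde{V}_{\post}'(y)|\le M$ for all $y>0$, and by part (i) we have $\tilde{V}_{\post}'<0$, so $0< H_t X_t^{\post}\le \tfrac{M}{\lambda}e^{-\beta t}$ holds pathwise. Taking expectations and letting $t\to\infty$ gives $0\le \mathbb{E}^x[H_t X_t^{\post}]\le \tfrac{M}{\lambda}e^{-\beta t}\to 0$, which is the first claim; because the dominating bound is deterministic and $\beta>0$, no convergence theorem is needed.

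For the second limit I would argue identically: \lemref{Post_value_sign}(iii) supplies a constant $M'<\infty$ with $|\tilde{V}_{\post}(y)|\le M'$ for all $y>0$, whence $|e^{-\beta t}\tilde{V}_{\post}(y_t)|\le M'e^{-\beta t}$ pathwise, and therefore $|\mathbb{E}^x[e^{-\beta t}\tilde{V}_{\post}(y_t)]|\le M'e^{-\beta t}\to 0$ as $t\to\infty$.

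I do not expect a real obstacle here; the entire content is the observation that part (iii) of the preceding lemma was engineered to provide exactly the uniform bounds on $y\,\tilde{V}_{\post}'(y)$ and on $\tilde{V}_{\post}(y)$ that turn both quantities into something dominated by a constant multiple of $e^{-\beta t}$. The one point worth stating carefully is that these bounds are uniform in $y$, hence valid along the random trajectory $y_t$, so that the decay is driven solely by $\beta>0$ and the passage of the limit through the expectation is automatic.
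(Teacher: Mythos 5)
Your proof is correct and is essentially identical to the paper's: both substitute $H_t = \lambda^{-1}e^{-\beta t}y_t$ (the paper writes $y=(-\tilde{V}_{\post}')^{-1}(x)$ for your $\lambda$) and then invoke the uniform bounds $\sup_{z>0}|z\tilde{V}_{\post}'(z)|<\infty$ and $\sup_{z>0}|\tilde{V}_{\post}(z)|<\infty$ from \lemref{Post_value_sign}(iii) so that the factor $e^{-\beta t}$ forces both expectations to zero. No gap; nothing further is needed.
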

\begin{proof}
Let $y = ( - \tilde{V}_{\post}' )^{- 1} (x)$, where $(\cdot)^{-1}$ denotes the inverse function, which exists and is well-defined by \lemref{Post_value_sign}. Using this, we have
\begin{align*}
  \mathbb{E}^{x} \left[ H_{t} X_{t}^{\post} \right] & = \mathbb{E}^{x} \left[ - H_{t} \tilde{V}_{\post}'(y_{t}) \right] \leq e^{- \beta t} y^{- 1} \sup_{z > 0} |z \tilde{V}_{\post}'(z)| \xrightarrow[]{t \to \infty} 0, \\
  \mathbb{E}^{x} [ e^{- \beta t} \tilde{V}_{\post}(y_{t}) ] & \leq e^{- \beta t} \sup_{z > 0} |\tilde{V}_{\post}(z)| \xrightarrow[]{t \to \infty} 0. 
\end{align*}
Both limits rely on the finiteness of $|\tilde{V}_{\post}(z)|$, $|z \tilde{V}_{\post}'(z)|$, and properties established in \lemref{Post_value_sign}.
\end{proof}

To summarize the post-retirement case, we present the following theorem:
\begin{theorem}
In the post-retirement case, the value function $V_{\post}$, its dual function $\tilde{V}_{\post}$, the total wealth $X^{\post}$, the optimal consumption rate $c^{\post}$, and the optimal portfolio $\pi^{\post}$ are given by 
\begin{equation} \label{post-retirement}
  \begin{split}
  & \tilde{V}_{\post}(y) = - \dfrac{1}{\gamma} \; \mathbb{E}^{y} \left[ \int_{0}^{\infty} e^{- \beta s} \left\{ y_{s} \left( \ln \left( \tfrac{A(\overline{L})}{y_{s}} \right) + 1 \right) \mathbf{1}_{ \{ y_{s} \leq A(\overline{L}) \} } + A(\overline{L}) \cdot \mathbf{1}_{ \{ y_{s} > A(\overline{L}) \} } \right\} d s \right], \\
  & V_{\post}(x) = \inf_{y > 0} \left(\tilde{V}_{\post}(y) + y x\right), \\
  & ( X_{t}^{\post}, \pi_{t}^{\post}, c_{t}^{\post} ) = \left( - \tilde{V}_{\post}'(y_{t}), \tfrac{\theta y_{t} \tilde{V}_{\post}''(y_{t})}{\sigma}, \tfrac{1}{\gamma} \left( \ln \left( \tfrac{A(\overline{L})}{y_{t}} \right) \right)^{+} \right),
  \end{split}
\end{equation}
where $y_{t} := y_{0} e^{\left( \beta - r - \frac{\theta^{2}}{2} \right) t - \theta B_{t}}$.
\end{theorem}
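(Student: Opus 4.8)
The plan is to read this statement as the \emph{verification theorem} that closes the duality argument developed above. The first displayed identity for $\tilde{V}_{\post}$ is not something to prove afresh: it is merely the defining representation \eqref{Dual_function_post} with the explicit form of $\overline{U}$ substituted, so I would dispose of it in one line. The real content is the duality identity $V_{\post}(x) = \inf_{y>0}(\tilde{V}_{\post}(y) + yx)$ together with the assertion that the triple $(X^{\post}, \pi^{\post}, c^{\post})$ is admissible and optimal. I would prove the duality identity by a two-sided inequality and, in the course of establishing the lower bound, exhibit the stated controls as the optimizer.

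For the easy inequality I would take the Lagrangian bound \eqref{Lagrange}, namely $V_{\post}(x) - yx \le \tilde{V}_{\post}(y)$ for every $y>0$, and pass to the infimum to get $V_{\post}(x) \le \inf_{y>0}(\tilde{V}_{\post}(y)+yx)$. For the reverse inequality I would construct the candidate. Given $x>0$, set $y := (-\tilde{V}_{\post}')^{-1}(x)$; this is well defined because \lemref{Post_value_sign} gives $\tilde{V}_{\post}''>0$ and $\tilde{V}_{\post}'<0$, so $-\tilde{V}_{\post}'$ is a strictly decreasing bijection of $(0,\infty)$ onto $(0,\infty)$, the boundary behaviour $-\tilde{V}_{\post}'(y)\to\infty$ as $y\downarrow 0$ and $-\tilde{V}_{\post}'(y)\to 0$ as $y\to\infty$ being read off \eqref{Post_value}. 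Then I would set $y_0 := y$, define $y_t$ by \eqref{y_t}, and declare $X_t^{\post} := -\tilde{V}_{\post}'(y_t)$, $\pi_t^{\post} := \theta y_t \tilde{V}_{\post}''(y_t)/\sigma$, and $c_t^{\post} := \tfrac{1}{\gamma}(\ln(A(\overline{L})/y_t))^+$. This $y$ is exactly the minimizer of $y\mapsto \tilde{V}_{\post}(y)+yx$, since strict convexity (from $\tilde{V}_{\post}''>0$) and the first-order condition $\tilde{V}_{\post}'(y)+x=0$ single it out.

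Next I would check admissibility and consistency. Admissibility is immediate: $X_t^{\post}=-\tilde{V}_{\post}'(y_t)>0$ by \lemref{Post_value_sign}, while the local integrability of $c^{\post}$ and $(\pi^{\post})^{2}$ follows from the explicit bounds in \lemref{Post_value_sign}(iii). To see that $X^{\post}$ is genuinely the wealth generated by $(c^{\post},\pi^{\post})$, I would apply \ito's lemma to $-\tilde{V}_{\post}'(y_t)$ (legitimate since $\tilde{V}_{\post}\in C^{3}$ by \lemref{Post_value_sign}(ii)) and match coefficients against \eqref{X_dynamics_post}. The diffusion term reproduces $\pi_t^{\post}$; for the drift I would differentiate the ODE \eqref{ODE_post} once and use the envelope identity $\overline{U}'(y_t)=-c_t^{\post}$ to reduce the drift to $(\mu-r)\pi_t^{\post}+rX_t^{\post}-c_t^{\post}$, as required. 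The two transversality limits $\lim_t \mathbb{E}^{x}[H_t X_t^{\post}]=0$ and $\lim_t \mathbb{E}^{x}[e^{-\beta t}\tilde{V}_{\post}(y_t)]=0$ are exactly \lemref{Post_vanish}.

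Finally I would establish attainment. Because $c_t^{\post}$ is the pointwise maximizer defining $\overline{U}$, equality holds in the Fenchel inequality: $-\tfrac{e^{-\gamma c_t^{\post}}}{\gamma}A(\overline{L}) = \overline{U}(y_t)+c_t^{\post}y_t$. Integrating against $e^{-\beta s}$, taking expectations, and using $e^{-\beta s}y_s = yH_s$ together with the budget constraint \eqref{Budget_constraint_post} holding with equality, I would obtain that the objective equals $\tilde{V}_{\post}(y)+yx$. The binding of the budget constraint is precisely where $\lim_t\mathbb{E}^{x}[H_tX_t^{\post}]=0$ upgrades the supermartingale in \eqref{ito_for_HX_post} to equality in the limit. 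Since $y$ was chosen as the minimizer, the attained value is $\inf_{y>0}(\tilde{V}_{\post}(y)+yx)$, which with the weak-duality bound forces $V_{\post}(x)=\inf_{y>0}(\tilde{V}_{\post}(y)+yx)$ and shows $(X^{\post},\pi^{\post},c^{\post})$ is optimal. I expect the main obstacle to be the rigorous justification of the binding budget constraint: one must control the local martingale in \eqref{ito_for_HX_post}, invoke Fatou and monotone convergence correctly, and verify that the transversality limit of \lemref{Post_vanish} indeed forces equality rather than merely $\le$. The remaining coefficient-matching and Fenchel-equality steps are routine given \lemref{Post_value_sign} and \lemref{Post_vanish}.
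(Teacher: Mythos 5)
Your proposal is correct and follows essentially the same route as the paper: weak duality from the Lagrangian bound \eqref{Lagrange}, the candidate built from the first-order condition $X_t^{\post}=-\tilde{V}_{\post}'(y_t)$ with $y=(-\tilde{V}_{\post}')^{-1}(x)$, and attainment by upgrading the budget constraint \eqref{Budget_constraint_post} to an equality using the square-integrability bounds of \lemref{Post_value_sign}(iii) (which make the stochastic integral in \eqref{ito_for_HX_post} a true martingale) together with the transversality limits of \lemref{Post_vanish}. The ``main obstacle'' you flag is exactly the step the paper carries out in its Section \ref{sec:Verification} verification theorem, via the estimate $\int_{0}^{\infty}\mathbb{E}^{x}\bigl[H_{s}^{2}\{(\pi_{s}^{\post})^{2}+(X_{s}^{\post})^{2}\}\bigr]ds<\infty$.
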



\section{Return to the original problem}  \label{sec:original}

Let $t \geq 0$, an admissible consumption rate $(c_{t})_{t \geq 0}$, and any $\mathcal{F}_{t}$-stopping time $\tau$ be given. Jeanblanc et al. \cite{Jeanblanc} states that  
\begin{align*}
  \mathbb{E}^{x} \left[ e^{- \beta \tau} V_{\post}(X_{\tau}) \right] = \mathbb{E}^{x} \left[ \int_{\tau}^{\infty} e^{- \beta s} U(c_{s}, \overline{L}) d s \right].
\end{align*}
This equivalence allows us to reformulate \eqref{Original_problem} as:  
\begin{align}
  V(x) & = \sup_{(c, \pi, \tau)} \mathbb{E}^{x} \left[ \int_{0}^{\tau} e^{- \beta s} \left( - \dfrac{e^{- \gamma c_{s}}}{\gamma} \cdot A(\underline{L}) \right) d s + e^{- \beta \tau} V_{\post}(X_{\tau}) \right]. \label{Original_problem_another}
\end{align}
Next, applying It\^{o}'s lemma to $H_{t} X_{t}$ (on $t < \tau$), we obtain: 
\begin{align}
  \int_{0}^{t} H_{s} c_{s} d s + H_{t} \left( X_{t} + \dfrac{Y}{r} \right) = x + \int_{0}^{t} H_{s} (\sigma \pi_{s} - \theta X_{s}) d B_{s} + Y \left( \dfrac{H_{t}}{r} + \int_{0}^{t} H_{s} d s \right). \label{ito_for_HX_pre}
\end{align}
The right-hand side of \eqref{ito_for_HX_pre} is a non-negative local martingale because \eqref{admissible_condition} holds and $\tfrac{H_{t}}{r} + \int_{0}^{t} H_{s} d s$ is a martingale (shown using the definition and the independent increments of Brownian motion). Thus, by Fatou's lemma, the left-hand side of \eqref{ito_for_HX_pre} is a supermartingale. Applying the optional sampling theorem, we derive: 
\begin{align}
  \mathbb{E}^{x} \left[ \int_{0}^{\tau} H_{s} c_{s} d s + H_{\tau} \left( X_{\tau} + \dfrac{Y}{r} \right) \right] \leq x + \dfrac{Y}{r} \label{Budget_constraint_pre}
\end{align}
(see Koo et al. \cite{KRS} for another approach to deriving budget constraints). Using a Lagrange multiplier approach, as in the post-retirement case, we combine \eqref{Original_problem_another} with \eqref{Budget_constraint_pre}. Together with \eqref{y_t} and the dual value for the post-retirement case, this leads to:  
\begin{align}
  &V(x) - y x \nonumber\\
  &\leq \sup_{\tau} \mathbb{E}^{y} \left[ \int_{0}^{\tau} e^{- \beta s} \left( - \dfrac{e^{- \gamma c_{s}}}{\gamma} \cdot A(\underline{L}) - c_{s} y e^{\beta s} H_{s} \right) d s + e^{- \beta \tau} \left( \tilde{V}_{\post}(y_{\tau}) - \dfrac{Y y_{\tau}}{r} \right) \right] + \dfrac{Y }{r}y \nonumber\\
  &=: \tilde{V}(y). \label{Tilde_V_upper}
\end{align}
Let us define the pre-retirement value function as  
\begin{align} \label{Pre_value_define}
  V_{\pre}(x) := \sup_{(c, \pi, \tau)} \mathbb{E}^{x} \left[ \int_{0}^{\tau} e^{- \beta s} \left( - \dfrac{e^{- \gamma c_{s}}}{\gamma} \cdot A(\underline{L}) \right) d s \right].
\end{align}
To solve \eqref{Pre_value_define}, we consider the dual function for pre-retirement 
\begin{align}
  \tilde{V}_{\pre}(y) := \sup_{\tau} \mathbb{E}^{y} \left[ \int_{0}^{\tau} e^{- \beta s} \underline{U}(y_{s}) d s \right] + \dfrac{Y}{r} y, \label{Dual_function_pre}
\end{align}
where $\underline{U}(y) := \sup_{c \geq 0} \left( - \tfrac{e^{- \gamma c}}{\gamma} \cdot A(\underline{L}) - c y \right) = \tfrac{- y \ln \left( \frac{A(\underline{L})}{y} \right) - y}{\gamma} \cdot \mathbf{1}_{ \{ y \leq A(\underline{L}) \} } - \tfrac{A(\underline{L})}{\gamma} \cdot \mathbf{1}_{ \{ y > A(\underline{L}) \} }$ with the maximizer $c = \tfrac{1}{\gamma} \left( \ln \left( \frac{A(\underline{L})}{y} \right) \right)^{+}$ since the map $c \mapsto - \tfrac{e^{- \gamma c}}{\gamma} \cdot A(\underline{L}) - c y$ is strictly concave. Also we see that $\underline{U}$ is decreasing and convex.
\begin{lemma} \label{Pre_value_sign}
Let $\tau = \inf \{ s > 0 : y_s \leq \overline{y} \}$ be a maximizer for \eqref{Dual_function_pre}. Then   \\
(i) $\tilde{V}_{\pre}(y) \leq \tfrac{Y}{r} y$, $\tilde{V}_{\pre}'(y) < \tfrac{Y}{r}$, $\tilde{V}_{\pre}''(y) \geq 0$ for all $y > 0$. Equalities become strict if $y > \overline{y}$. \\
(ii) $\tilde{V}_{\pre} \in C((0, \infty)) \cap C^{2}((\overline{y}, \infty))$. \\
(iii) $\displaystyle\sup_{y > 0} \left(|\tilde{V}_{\pre}(y)| + |y \tilde{V}_{\pre}'(y)| + |y^{2} \tilde{V}_{\pre}''(y)|\right) < \infty$.
\end{lemma}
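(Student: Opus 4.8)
The plan is to exploit the two regimes induced by the candidate optimal rule $\tau=\inf\{s>0:y_s\le\overline{y}\}$: a stopping region $\{0<y\le\overline{y}\}$, on which $\tau=0$ so that $\tilde{V}_{\pre}$ coincides with the obstacle, and a continuation region $\{y>\overline{y}\}$, on which $\tilde{V}_{\pre}$ is given by the representation \eqref{Dual_function_pre} evaluated at $\tau$ and, equivalently, solves the linear ODE $\tfrac12\theta^2y^2w''+(\beta-r)yw'-\beta w+\underline{U}(y)=0$ in the spirit of \eqref{ODE_post}. First I would record, from the closed form of $\underline{U}$, the structural facts that drive everything: $\underline{U}(y)<0$ for every $y>0$, and $\underline{U}\in C^{1}((0,\infty))$ is decreasing and convex. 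I would also use the scaling $y_s=y\,e^{(\beta-r-\theta^{2}/2)s-\theta B_s}$, i.e. the multiplicative (hence linear) dependence of the whole path on the initial datum $y$, which is the mechanism that transmits convexity of the data to convexity of the value.

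For (i), the comparison $\tilde{V}_{\pre}(y)\le\tfrac{Y}{r}y$ is immediate from \eqref{Dual_function_pre}: testing against the admissible choice $\tau=0$ leaves the running integral empty, while any competing $\tau$ only contributes the nonpositive term $\mathbb{E}^{y}\big[\int_0^\tau e^{-\beta s}\underline{U}(y_s)\,ds\big]\le0$ because $\underline{U}<0$. Equality is attained exactly on the stopping region, and on $\{y>\overline{y}\}$ the optimizing $\tau$ is strictly positive almost surely, so the added term is strictly negative and the inequality becomes strict; this same dichotomy produces the strict versions of the remaining bounds for $y>\overline{y}$. Convexity $\tilde{V}_{\pre}''\ge0$ I would obtain from convexity of $\underline{U}$ together with the multiplicative dependence of $y_s$ on $y$ (standard convexity propagation for optimal stopping of geometric diffusions), cross-checked against the sign of $w''$ forced by the ODE on $\{y>\overline{y}\}$; the slope bound $\tilde{V}_{\pre}'\le\tfrac{Y}{r}$ then follows by differentiating the representation (an envelope argument) and propagating the slope of the obstacle through the continuation region, with strictness again coming from $\underline{U}<0$ and $\tau>0$.

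For (ii), global continuity on $(0,\infty)$ is the continuity of the value of an optimal stopping problem: $\tilde{V}_{\pre}$ is a supremum of expectations whose integrands depend continuously on $y$ through $y_s$, so dominated convergence—justified by the uniform bounds in (iii)—gives continuity, and in particular the matching of the two regimes at $\overline{y}$. On the open continuation region the operator is uniformly elliptic and the source $\underline{U}$ is $C^{1}$, so standard interior regularity for the linear ODE promotes the solution to $C^{2}((\overline{y},\infty))$. Since only $C^{0}$ is claimed across $\overline{y}$, no smooth-fit (hence no $C^{1}$) argument is required at the free boundary.

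Finally, for (iii) I would invoke the closed form of $\tilde{V}_{\pre}$ constructed in Section~\ref{sec:solution}: on the stopping region $\tilde{V}_{\pre}$ reduces to an expression controlled directly by \lemref{Post_value_sign}(iii), while on the continuation region the homogeneous part must be carried by the bounded branch $y^{m_-}$ (recall $m_-<0<1<m_+$, so $y^{m_-}\to0$ as $y\to\infty$) and the particular part by bounded constant/logarithmic contributions that vanish at the relevant endpoints; multiplying by $y$ and $y^{2}$ only improves the decay, yielding the three uniform bounds. I expect this last item to be the main obstacle: controlling $|\tilde{V}_{\pre}|+|y\tilde{V}_{\pre}'|+|y^{2}\tilde{V}_{\pre}''|$ uniformly up to both endpoints and across $\overline{y}$ requires pinning down the correct (bounded) homogeneous branch in the continuation region and, crucially, verifying the cancellation of any linearly growing contribution, so that this estimate must be reconciled with the explicit solution of the free boundary problem rather than read off abstractly.
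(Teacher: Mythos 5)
Your proposal founders on a structural point: you read \eqref{Dual_function_pre} as a genuine optimal stopping problem and build everything on that reading --- the $\tau=0$ test, the envelope argument for the slope bound, and convexity propagation through the supremum. But $\underline{U}<0$ everywhere, so for \emph{every} stopping time $\tau$ the running term $\mathbb{E}^{y}\big[\int_0^\tau e^{-\beta s}\underline{U}(y_s)\,ds\big]$ is nonpositive and vanishes precisely when $\tau=0$ a.s.; hence the literal supremum equals $\tfrac{Y}{r}y$ identically and is attained only at $\tau\equiv 0$. Your first argument in part (i) therefore proves $\tilde{V}_{\pre}\equiv\tfrac{Y}{r}y$, which contradicts the strict inequalities you assert immediately afterwards: the hitting time $\tau=\inf\{s>0:y_s\le\overline{y}\}$ is strictly \emph{suboptimal} for this functional whenever $y>\overline{y}$, so it cannot be ``the optimizing $\tau$.'' The lemma only makes sense --- and the paper only uses it --- with $\tilde{V}_{\pre}$ defined by evaluating the functional at that specific hitting time, which is optimal for the full problem \eqref{Tilde_V_upper} (where stopping triggers the continuation payoff $\tilde{V}_{\post}(y_{\tau})-\tfrac{Y}{r}y_{\tau}$), not for \eqref{Dual_function_pre} in isolation. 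Once $\tau$ is a fixed, $y$-dependent hitting time, the machinery you invoke is unavailable: there is no supremum, so ``sup of convex functions'' does not apply (convexity of $\underline{U}$ plus linearity of $y\mapsto y_s$ no longer transmits, because the domain of integration $[0,\tau]$ also moves with $y$), there is no envelope theorem for $\tilde{V}_{\pre}'\le\tfrac{Y}{r}$, and the ODE on $(\overline{y},\infty)$ by itself does not force a sign on $\tilde{V}_{\pre}''$.

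What actually carries the paper's proof, and what your proposal never supplies, is sign information on the coefficients of the explicit solution \eqref{Pre_value}: part (i) reduces to $C_1<0$, $C_3>0$ and --- the crux --- $C_2>0$ in \eqref{Pre_coefficient}, which the paper gets by a comparison with the Merton dual: since $\underline{U}<0$ and the hitting time is finite with positive probability, $\tilde{V}_{\pre}(y)>\tilde{V}_{\Mer}(y)$ for $y>\overline{y}$, and on $(\overline{y},A(\underline{L})]$ the difference of the closed forms is exactly $C_2y^{m_-}$. Convexity then follows from monotonicity of $k(y)=C_1m_+(m_+-1)y^{m_+-1}+C_2m_-(m_--1)y^{m_--1}+\tfrac{1}{\gamma r}$, the slope bound from convexity together with $\tilde{V}_{\pre}'(A(\underline{L}))-\tfrac{Y}{r}=C_3m_-A(\underline{L})^{m_--1}<0$, and the value bound by integrating from $\overline{y}$, where $\tilde{V}_{\pre}(\overline{y})=\tfrac{Y}{r}\overline{y}$. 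On the positive side, your part (ii) (ODE bootstrap for interior $C^2$, value matching at $\overline{y}$) agrees with the paper, and your instinct on (iii) is sound: the bound must be read off the closed form, and the linearly growing term does \emph{not} cancel --- for $y>A(\underline{L})$ one has $\tilde{V}_{\pre}(y)=\tfrac{Y}{r}y+C_3y^{m_-}-\tfrac{A(\underline{L})}{\gamma\beta}$, so (iii) can only hold for $\tilde{v}_{\pre}(y)=\tilde{V}_{\pre}(y)-\tfrac{Y}{r}y$, which is the quantity the verification argument actually needs. But that observation does not repair part (i): without $C_2>0$ (or an equivalent comparison statement), your route to convexity and to the slope bound fails.
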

\begin{proof} {\ \\}
\noindent (i) Let us define $\tilde{v}_{\pre}(y) := \tilde{V}_{\pre}(y) - \frac{Y}{r} y$. Then $\tilde{v}_{\pre}$ satisfies  
\begin{align} \label{ODE_pre}
    \begin{cases}
  \frac{\theta^{2} y^{2}}{2} \cdot \tilde{v}_{\pre}''(y) + (\beta - r) y \tilde{v}_{\pre}'(y) - \beta \tilde{v}_{\pre}(y) + \underline{U}(y) = 0, &\mbox{for~} y > \overline{y}, \\
  \tilde{v}_{\pre}(y) = 0, &\mbox{for~} 0 < y \leq \overline{y}.
    \end{cases}
\end{align}
Thus we have
\begin{equation}  \label{Pre_value}
  \begin{split}
  \tilde{V}_{\pre}(y) & = \frac{Y }{r}y + \begin{cases}
   C_{1} y^{m_{+}} + C_{2} y^{m_{-}} + \left( \frac{\beta - r + \frac{\theta^{2}}{2}}{\gamma r^{2}} - \frac{\ln (A(\underline{L})) + 1}{\gamma r} \right) y + \frac{y \ln (y)}{\gamma r}, &\ \text{if } \overline{y} < y \leq A(\underline{L}), \\
   C_{3} y^{m_{-}} - \frac{A(\underline{L})}{\gamma \beta}, & \text{if } y > A(\underline{L}),
   \end{cases} \\
  \tilde{V}_{\pre}'(y) & = \frac{Y}{r} + \begin{cases}
   C_{1} m_{+} y^{m_{+} - 1} + C_{2} m_{-} y^{m_{-} - 1} + \frac{\beta - r + \frac{\theta^{2}}{2}}{\gamma r^{2}} + \frac{\ln \left(\frac{y}{A(\underline{L})}\right)}{\gamma r}, & \text{if } \overline{y} < y \leq A(\underline{L}), \\
   C_{3} m_{-} y^{m_{-} - 1}, & \text{if } y > A(\underline{L}),
   \end{cases} \\
  y \tilde{V}_{\pre}''(y) & = \begin{cases}
   C_{1} m_{+} (m_{+} - 1) y^{m_{+} - 1} + C_{2} m_{-} (m_{-} - 1) y^{m_{-} - 1} + \frac{1}{\gamma r}, & \text{if } \overline{y} < y \leq A(\underline{L}), \\
   C_{3} m_{-} (m_{-} - 1) y^{m_{-} - 1}, & \text{if } y > A(\underline{L}).
   \end{cases}
   \end{split}
\end{equation}
The coefficients $C_1$, $C_2$, and $C_3$ are determined by ensuring $\tilde{V}_{\pre} \in C^1((\overline{y}, \infty))$ and $\tilde{V}_{\pre}$ is continuous at $y = \overline{y}$:  
\begin{equation}  \label{Pre_coefficient}
    \begin{split}
  C_{1} & = \frac{m_{-} - 1}{\gamma r m_{+} (m_{+} - 1) (m_{+} - m_{-}) A(\underline{L})^{m_{+} - 1}} < 0, \\
  C_{2} & = \frac{r \ln \left(\frac{A(\underline{L})}{\overline{y}}\right) - \left( \beta - 2 r + \frac{\theta^{2}}{2} \right)}{\gamma r^{2} \overline{y}^{m_{-} - 1}} - C_{1} \overline{y}^{m_{+} - m_{-}} > 0, \\
  C_{3} & = C_{2} + \frac{m_{+} - 1}{\gamma r m_{-} (m_{-} - 1) (m_{+} - m_{-}) A(\underline{L})^{m_{-} - 1}} > \frac{m_{+} - 1}{\gamma r m_{-} (m_{-} - 1) (m_{+} - m_{-}) A(\underline{L})^{m_{-} - 1}} > 0.
    \end{split}
\end{equation}
Here, the proof of all signs in \eqref{Pre_coefficient} is complete if $C_{2} > 0$ is shown, which can be easily derived from $\tilde{V}_{\Mer}(y) < \tilde{V}_{\pre}(y)$ for $y > \overline{y}$ by considering the stochastic representation (refer to Section \ref{sec:merton} for the form of $\tilde{V}_{\Mer}$).

The function $ k(y) := C_{1} m_{+} (m_{+} - 1) y^{m_{+} - 1} + C_{2} m_{-} (m_{-} - 1) y^{m_{-} - 1} + \frac{1}{\gamma r} $ satisfies $ k' < 0$ with $k(A(\underline{L}))=\tfrac{m_{+} - 1}{\gamma r(m_+-m_-)}+C_{2} m_{-} (m_{-} - 1) A(\underline{L})^{m_{-} - 1} > 0$ due to $C_2>0$. This result and $C_3>0$ imply $\tilde{V}_{\pre}''(y) > 0$ for $y>\overline{y}$. Also, combining the convexity with $C_{3} > 0$ and $\tilde{v}_{\pre}'(A(\underline{L}))=\tilde{V}_{\pre}'(A(\underline{L})) - \frac{Y}{r} = C_{3} m_{-} A(\underline{L})^{m_{-} - 1} < 0$ gives $\tilde{v}_{\pre}'(y)=\tilde{V}_{\pre}'(y) - \frac{Y}{r}<0$ for $y > \overline{y}$. Similarly, combining this result with $\tilde{v}_{\pre}(\overline{y})=\tilde{V}_{\pre}(\overline{y}) - \frac{Y}{r}\overline{y} = 0$ gives $\tilde{v}_{\pre}(y)=\tilde{V}_{\pre}(y) - \frac{Y}{r}y<0$ for $y > \overline{y}$, that is, $\tilde{V}_{\pre}(y) < \frac{Y}{r}y$ for $y > \overline{y}$.

\noindent (ii) From \eqref{ODE_pre}, the fact that $\tilde{V}_{\pre} \in C^{1}((\overline{y}, \infty))$, and $\underline{U} \in C^{1}((\overline{y}, \infty))$, we conclude that $\tilde{V}_{\pre} \in C^{2}((\overline{y}, \infty))$. $\tilde{V}_{\pre} \in C((0, \infty))$ is already shown by determining $C_{1}$ ,$C_{2}$, $C_{3}$.

\noindent (iii)  The finiteness of the supremum is a direct result of the boundedness of the terms.
\end{proof}

Similarly to the post-retirement case, we can establish the following results if the optimal retirement time $\tau^{*}$ is identified:
\begin{theorem}
Let $\tau^{*}$ be the optimal retirement time. In the pre-retirement case, the value function $V_{\pre}$, its dual function $\tilde{V}_{\pre}$, the total wealth $X^{\pre}$, the optimal consumption rate $c^{\pre}$, and the optimal portfolio $\pi^{\pre}$ are given by
\begin{equation} \label{pre-retirement}
  \begin{split}
  & \tilde{V}_{\pre}(y) = - \frac{1}{\gamma} \; \mathbb{E}^{y} \left[ \int_{0}^{\tau^{*}} e^{- \beta s} \left\{ y_{s} \left( \ln \left( \frac{A(\underline{L})}{y_{s}} \right) + 1 \right) \mathbf{1}_{ \{ y_{s} \leq A(\underline{L}) \} } + A(\underline{L}) \cdot \mathbf{1}_{ \{ y_{s} > A(\underline{L}) \} } \right\} d s \right] + \frac{Y y}{r}, \\
  & V_{\pre}(x) = \inf_{y > 0} \left(\tilde{V}_{\pre}(y) + y x\right), \\
  & ( X_{t}^{\pre}, \pi_{t}^{\pre}, c_{t}^{\pre} ) = \left( - \tilde{V}_{\pre}'(y_{t}), \frac{\theta y_{t} \tilde{V}_{\pre}''(y_{t})}{\sigma}, \frac{1}{\gamma} \left( \ln \left( \tfrac{A(\underline{L})}{y_{t}} \right) \right)^{+} \right).
  \end{split}
\end{equation}
\end{theorem}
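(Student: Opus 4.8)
The plan is to mirror the verification already carried out for the post-retirement theorem, now accounting for the free boundary $\overline{y}$ and the income term $Y$. The first identity is essentially a restatement of the definition \eqref{Dual_function_pre}: substituting the explicit form of $\underline{U}$ and taking $\tau = \tau^{*} = \inf\{s > 0 : y_s \leq \overline{y}\}$, the maximizer supplied by \lemref{Pre_value_sign}, turns the running reward $\underline{U}(y_s)$ into precisely the bracketed expression, so this step is immediate once one recalls $\underline{U}(y) = -\tfrac{1}{\gamma}\bigl[y(\ln(A(\underline{L})/y)+1)\mathbf{1}_{\{y \leq A(\underline{L})\}} + A(\underline{L})\mathbf{1}_{\{y > A(\underline{L})\}}\bigr]$. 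The substantive work is to establish the duality $V_{\pre}(x) = \inf_{y > 0}(\tilde{V}_{\pre}(y) + yx)$ and to read off the optimal triple $(X^{\pre}, \pi^{\pre}, c^{\pre})$.

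For the duality I would first record the weak inequality $V_{\pre}(x) \leq \tilde{V}_{\pre}(y) + yx$ for every $y > 0$, which comes from the Lagrangian/budget argument already used to obtain \eqref{Tilde_V_upper} together with the budget constraint \eqref{Budget_constraint_pre}; taking the infimum over $y$ yields ``$\leq$''. To obtain equality I would construct a candidate optimal plan: given $x$, set $y_0 = y^{*} = (-\tilde{V}_{\pre}')^{-1}(x)$, which is well defined on the continuation region because $\tilde{V}_{\pre}'' > 0$ on $(\overline{y}, \infty)$ by \lemref{Pre_value_sign}(i), and take $X_t^{\pre} = -\tilde{V}_{\pre}'(y_t)$. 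Applying It\^o requires $\tilde{V}_{\pre} \in C^{3}((\overline{y}, \infty))$; I would upgrade \lemref{Pre_value_sign}(ii) from $C^{2}$ to $C^{3}$ exactly as in the post-retirement case, solving the ODE \eqref{ODE_pre} for $\tilde{v}_{\pre}''$ and differentiating once more, using $\underline{U} \in C^{1}$. Since the pre-retirement trajectory stays in $\{y_t > \overline{y}\}$ on $[0, \tau^{*})$, the kink of $\tilde{V}_{\pre}$ at $\overline{y}$ is harmless for the It\^o argument. Differentiating \eqref{ODE_pre} and matching the It\^o drift and diffusion of $X_t^{\pre}$ against the pre-retirement wealth SDE \eqref{X_dynamics} then forces $\sigma \pi_t = \theta y_t \tilde{V}_{\pre}''(y_t)$ and $c_t = -\underline{U}'(y_t) = \tfrac{1}{\gamma}(\ln(A(\underline{L})/y_t))^{+}$, precisely the claimed $\pi_t^{\pre}$ and $c_t^{\pre}$; the income $Y$ is absorbed through the relation $\tilde{V}_{\pre}' = \tilde{v}_{\pre}' + Y/r$.

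It remains to verify admissibility and attainment of the dual bound. I would establish the vanishing conditions $\lim_{t \to \infty} \mathbb{E}^{x}[H_t X_t^{\pre}] = 0$ and $\lim_{t \to \infty} \mathbb{E}^{x}[e^{-\beta t} \tilde{V}_{\pre}(y_t)] = 0$ by the same estimates as in \lemref{Post_vanish}, now invoking the uniform bounds of \lemref{Pre_value_sign}(iii); these promote the local martingale in \eqref{ito_for_HX_pre} to a true martingale up to $\tau^{*}$ and make the budget constraint \eqref{Budget_constraint_pre} bind, which is the complementary-slackness condition closing the gap to ``$=$''. The stopping rule $\{y_s \leq \overline{y}\}$ translates into the wealth reaching the threshold $\bar{x} = -\tilde{V}_{\pre}'(\overline{y})$, and at $\tau^{*}$ one glues to the post-retirement solution, the value-matching built into the coefficients \eqref{Pre_coefficient} guaranteeing consistency of $X_{\tau^{*}}^{\pre} = \bar{x}$ with the post-retirement value. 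I expect the main obstacle to be exactly this verification of strong duality: unlike the post-retirement case one must control the stopped wealth process and the terminal contribution at the random time $\tau^{*}$, ensuring the supermartingale in \eqref{ito_for_HX_pre} is a genuine martingale up to $\tau^{*}$ and that no mass escapes at infinity on $\{\tau^{*} = \infty\}$, so that the budget constraint is tight and the infimum in the duality is attained at $y^{*}$.
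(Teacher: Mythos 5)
Your proposal is correct and takes essentially the same approach as the paper: the paper presents this theorem as a direct mirror of the post-retirement construction (definitional dual representation via $\underline{U}$ and $\tau^{*}$, Lagrangian duality from \eqref{Budget_constraint_pre}, and It\^{o} matching to read off $(X^{\pre},\pi^{\pre},c^{\pre})$), deferring tightness of the budget constraint to the verification theorem in Section \ref{sec:Verification}, where the square-integrability bounds of \lemref{Pre_value_sign}(iii) and Lemma 6.3 of Karatzas--Wang close the duality gap exactly as you describe. The only cosmetic difference is that you construct the optimal portfolio explicitly via a $C^{3}$ upgrade and It\^{o} matching on the continuation region $(\overline{y},\infty)$, while the paper cites Karatzas--Wang for the existence of the replicating portfolio; these amount to the same argument.
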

This formulation mirrors the structure used in the post-retirement scenario, with adjustments to account for the pre-retirement context. The components are defined in terms of the dual function and the optimal stopping time, ensuring consistency with the economic framework.


\section{Solution to free boundary value problem} \label{sec:solution}

Under considering \eqref{Tilde_V_upper}, let us define 
\begin{align*}
  \tilde{v}(y) & := \sup_{\tau} \mathbb{E}^{y} \left[ \int_{0}^{\tau} e^{- \beta s} \underline{U}(y_{s}) d s + e^{- \beta \tau} \left( \tilde{V}_{\post}(y_{\tau}) - \tfrac{Y }{r}y_{\tau} \right) \right].
\end{align*}
From \eqref{Original_problem_another} and \eqref{pre-retirement}, it follows that $\tilde{v}(y)= \tilde{V}(y) - \tfrac{Y}{r}y$. Solving $\tilde{v}$ is equivalent to finding $\overline{y} \geq 0$ and a function $\tilde{w} : [0, \infty) \rightarrow \mathbb{R}$ satisfying the following  variational inequality (see p. 232 of Oksendal \cite{Oksendal1}):
\begin{align}
  \begin{cases}
  \frac{1 }{2}\theta^{2}y^{2} \tilde{w}''(y) + (\beta - r) y \tilde{w}'(y) - \beta \tilde{w}(y) + \underline{U}(y) - \overline{U}(y) + Y y = 0, & \mbox{for}~ y > \overline{y}, \\
  \frac{1 }{2}\theta^{2}y^{2} \tilde{w}''(y) + (\beta - r) y \tilde{w}'(y) - \beta \tilde{w}(y) + \underline{U}(y) - \overline{U}(y) + Y y \leq 0, & \mbox{for}~ 0 < y \leq \overline{y}, \\
  \tilde{w}(y) \geq 0, & \mbox{for}~ y > \overline{y}, \\
  \tilde{w}(y) = 0, & \mbox{for}~ 0 < y \leq \overline{y},
  \end{cases}  
   \label{ODE_equivalent}
\end{align}
where $\tilde{w}(y) = \tilde{v}(y) - \tilde{V}_{\post}(y) + \tfrac{Y}{r} y$, as derived from \eqref{ODE_post}. The stochastic representation of \eqref{ODE_equivalent} is given by
\begin{align}
  \tilde{w}(y) & = \sup_{\tau} \mathbb{E}^{y} \left[ \int_{0}^{\tau} e^{- \beta s} \left(\underline{U}(y_{s}) - \overline{U}(y_{s}) + Y y_{s}\right) d s \right] \label{ODE_equivalent_stochastic}
\end{align}
with $\tau = \inf \{ s \geq 0 : y_{s} \leq \overline{y} \}$  and $\overline{y} \geq 0$ is to be determined.

Note that $A(\overline{L}) < A(\underline{L})$ and 
\begin{align}
 0> \underline{U}(y) - \overline{U}(y) = \begin{cases}
                                       - \tfrac{1 - \alpha}{\alpha} \cdot (\overline{L} - \underline{L}) y, & \text{if } y \leq A(\overline{L}), \\
                                       \tfrac{y}{\gamma} \cdot \ln \left( \tfrac{y}{A(\underline{L})} \right) + \tfrac{A(\overline{L}) - y}{\gamma}, & \text{if } A(\overline{L}) < y \leq A(\underline{L}), \\
                                       \tfrac{A(\overline{L}) - A(\underline{L})}{\gamma}, &  \text{if } y > A(\underline{L}).
  \end{cases} \label{uU-oU}
\end{align}
By observing this function \eqref{uU-oU}, the map $y \mapsto \underline{U}(y) - \overline{U}(y)$ is continuously differentiable, and its derivative $y \mapsto \underline{U}'(y) - \overline{U}'(y)$ is bounded and non-decreasing. This implies that $y \mapsto \underline{U}(y) - \overline{U}(y)$ is $C^{1}$, Lipschitz continuous, and convex.

To solve \eqref{ODE_equivalent} for $y > \overline{y}$, we use the variation of parameters method, yielding  
\begin{align}
  \tilde{w}(y) = \begin{cases}
   C y^{m_{-}} + \tfrac{2}{\theta^{2} (m_{+} - m_{-})} \cdot \bigg[ y^{m_{+}} \int_{y}^{\infty} z^{- 1 - m_{+}} \left\{\underline{U}(z) - \overline{U}(z) + Y z\right\} d z &\\
   \qquad + y^{m_{-}} \int_{0}^{y} z^{- 1 - m_{-}} \left\{\underline{U}(z) - \overline{U}(z) + Y z\right\} d z \bigg], 
   & \text{if } y > \overline{y}, \\
   0, & \text{if } 0 < y \leq \overline{y},
   \end{cases} \label{Tilde_y}
\end{align}
where $C$ and $\overline{y}$ are to be determined. Assuming $\tilde{w} \in C^{1}((0, \infty))$ so that we apply the boundary conditions $\tilde{w}(\overline{y}) = \tilde{w}'(\overline{y}) = 0$, we derive the following system:  
\begin{equation*}
  \begin{split}
  0 & = C \overline{y}^{m_{-}} + \tfrac{2}{\theta^{2} (m_{+} - m_{-})}\bigg[ \overline{y}^{m_{+}} \int_{\overline{y}}^{\infty} z^{- 1 - m_{+}} \left\{\underline{U}(z) - \overline{U}(z) + Y z\right\} d z \\
  &\qquad+ \overline{y}^{m_{-}} \int_{0}^{\overline{y}} z^{- 1 - m_{-}} \left\{\underline{U}(z) - \overline{U}(z) + Y z\right\} d z \bigg], \\
  0 & = m_{-} C \overline{y}^{m_{-} - 1} + \tfrac{2}{\theta^{2} (m_{+} - m_{-})} \bigg[ m_{+} \overline{y}^{m_{+} - 1} \int_{\overline{y}}^{\infty} z^{- 1 - m_{+}} \left\{\underline{U}(z) - \overline{U}(z) + Y z\right\} d z \\
  & \qquad + m_{-} \overline{y}^{m_{-} - 1} \int_{0}^{\overline{y}} z^{- 1 - m_{-}} \left\{\underline{U}(z) - \overline{U}(z) + Y z\right\} d z \bigg].
  \end{split}
\end{equation*}
From these equations, we deduce the following:
\begin{equation} \label{Equation_overline_y}
  \begin{split}
  0 & = \int_{\overline{y}}^{\infty} z^{- 1 - m_{+}} \left\{\underline{U}(z) - \overline{U}(z) + Y z\right\} d z =: R(\overline{y}; Y), \\
  C & = - \frac{2}{\theta^{2} (m_{+} - m_{-})} \cdot \int_{0}^{\overline{y}} z^{- 1 - m_{-}} \left\{\underline{U}(z) - \overline{U}(z) + Y z\right\} d z.
  \end{split}
\end{equation}
The system implies that if income $Y$ is sufficiently large, the agent will not retire. The following lemma provides a threshold for $Y$ beyond which retirement is not optimal. 
\begin{proposition} \label{overline_y_zero_condition}
$\overline{y} < \infty$. In particular, $\overline{y} = 0$ if $Y \geq \tfrac{1 - \alpha}{\alpha} (\overline{L} - \underline{L})$.
\end{proposition}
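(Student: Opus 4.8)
The plan is to read off everything from the defining relation $R(\overline{y}; Y) = \int_{\overline{y}}^{\infty} z^{-1-m_{+}}\{\underline{U}(z) - \overline{U}(z) + Yz\}\, dz = 0$ in \eqref{Equation_overline_y}, by studying the integrand $g(z) := \underline{U}(z) - \overline{U}(z) + Yz$. First I would record the global shape of $g$. Since the text already established that $z \mapsto \underline{U}(z) - \overline{U}(z)$ is $C^{1}$ and convex with limit $0$ as $z \downarrow 0$, and $Yz$ is linear, $g$ is $C^{1}$ and convex with $g(0^{+}) = 0$. From the first branch of \eqref{uU-oU}, on $(0, A(\overline{L})]$ one has $g(z) = \kappa z$ with $\kappa := Y - \tfrac{1-\alpha}{\alpha}(\overline{L} - \underline{L})$, so $g'(0^{+}) = \kappa$; from the last branch, $g(z) = \tfrac{A(\overline{L}) - A(\underline{L})}{\gamma} + Yz \to +\infty$ as $z \to \infty$.

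Second, I would dispose of the regime $\kappa \geq 0$, i.e.\ $Y \geq \tfrac{1-\alpha}{\alpha}(\overline{L} - \underline{L})$. Convexity makes $g'$ nondecreasing, so $g'(z) \geq g'(0^{+}) = \kappa \geq 0$; hence $g$ is nondecreasing and $g \geq g(0^{+}) = 0$ everywhere, with $g > 0$ for large $z$. Then $R(\overline{y}; Y) = \int_{\overline{y}}^{\infty} z^{-1-m_{+}} g(z)\, dz > 0$ for every $\overline{y} \geq 0$, so $R(\cdot; Y)$ has no positive zero and the only admissible value is $\overline{y} = 0$, the never-retire case. Equivalently, through the stochastic representation \eqref{ODE_equivalent_stochastic}, $g \geq 0$ forces $\tau = \infty$ to be optimal, so $\tilde{w}(y) > 0$ for all $y > 0$ and the continuation region is all of $(0, \infty)$.

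Third, for $\kappa < 0$ I would prove $0 < \overline{y} < \infty$. Here $g(0^{+}) = 0$, $g'(0^{+}) = \kappa < 0$, $g$ convex, and $g \to +\infty$, so $g$ admits a unique zero $z^{*} \in (0, \infty)$, is strictly negative on $(0, z^{*})$ and strictly positive on $(z^{*}, \infty)$. I then track $R(\cdot; Y)$: near the origin $g(z) = \kappa z$ with $\kappa < 0$ and $\int_{0} z^{-m_{+}}\, dz = \infty$ (since $m_{+} > 1$), so the lower tail forces $R(\overline{y}; Y) \to -\infty$ as $\overline{y} \downarrow 0$, while $R(z^{*}; Y) = \int_{z^{*}}^{\infty} z^{-1-m_{+}} g(z)\, dz > 0$ (positive integrand, convergent at $\infty$ because $z^{-1-m_{+}} g(z) \sim Y z^{-m_{+}}$ with $m_{+} > 1$). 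Since $\tfrac{\partial R}{\partial \overline{y}}(\overline{y}; Y) = -\overline{y}^{-1-m_{+}} g(\overline{y}) > 0$ on $(0, z^{*})$, the map $R(\cdot; Y)$ increases continuously from $-\infty$ to a positive value on $(0, z^{*}]$, and the intermediate value theorem yields a unique root $\overline{y} \in (0, z^{*})$. In particular $\overline{y} < z^{*} < \infty$, which finishes both claims.

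The main obstacle will be the boundary analysis at the origin: I must argue that the divergence $R(0^{+}; Y) = -\infty$ when $\kappa < 0$ comes solely from the linear branch $g(z) = \kappa z$ near $0$, isolated from the convergent contributions of the middle and upper branches of \eqref{uU-oU}, while simultaneously checking that the far-field integral converges thanks to $m_{+} > 1$. Once the sign and monotonicity of $g$—driven entirely by its convexity together with the sign of $g'(0^{+}) = \kappa$—are in hand, the monotonicity of $R$ and the intermediate value argument are routine.
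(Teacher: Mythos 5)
Your proof is correct, but it routes the finiteness claim differently from the paper. The sign analysis of the integrand $g(z)=\underline{U}(z)-\overline{U}(z)+Yz$ --- convexity, $g(0^{+})=0$, initial slope $\kappa = Y-\tfrac{1-\alpha}{\alpha}(\overline{L}-\underline{L})$, and the resulting dichotomy ($g\geq 0$ everywhere when $\kappa\geq 0$; a unique sign change at $z^{*}$, the paper's $j$ from \eqref{Sign_according_to_j}, when $\kappa<0$) --- is common to both arguments, and in the regime $\kappa\geq 0$ your reasoning (no positive root of $R$, plus the stochastic representation forcing $\tau=\infty$) is essentially the paper's. The divergence is in the regime $\kappa<0$: the paper stays probabilistic, exhibiting the hitting time $\tau=\inf\{s\geq 0: y_{s}\leq j\}$ as a stopping time with strictly positive value in \eqref{ODE_equivalent_stochastic}, which shows the continuation region contains $(j,\infty)$ and hence $\overline{y}\leq j<\infty$; you instead prove that the smooth-fit equation $R(\overline{y};Y)=0$ from \eqref{Equation_overline_y} has a unique root in $(0,z^{*})$, via the monotonicity $\partial R/\partial\overline{y}=-\overline{y}^{-1-m_{+}}g(\overline{y})>0$ there, the divergence $R\to-\infty$ at the origin (using $m_{+}>1$), and the intermediate value theorem. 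That IVT argument is precisely the paper's proof of its \emph{next} result, \proref{overline_y<j}, so you have effectively merged the two propositions: your route buys uniqueness and the location bound $\overline{y}<z^{*}$ at the same time. The one caveat is logical bookkeeping: identifying $\overline{y}$ with the root of $R=0$ presupposes the smooth-fit characterization of the free boundary, which the paper only fully justifies in the later verification proposition, whereas the paper's probabilistic argument for the present statement works directly with the optimal stopping problem and needs no such identification. Within the paper's construct-then-verify structure this is harmless, but your proof of $\overline{y}<\infty$ is, strictly speaking, a statement about the candidate boundary rather than about the stopping region itself.
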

\begin{proof}
If $\tau = 0$ (equivalently, $\overline{y} = \infty$), it is a candidate maximizer for \eqref{ODE_equivalent_stochastic}. To prove that $\tau = 0$ is not optimal, it suffices to demonstrate that the expectation in \eqref{ODE_equivalent_stochastic} is strictly positive for some stopping time $\tau > 0$.

If $Y \geq \tfrac{1 - \alpha}{\alpha}(\overline{L} - \underline{L})$, then $\underline{U}(y) - \overline{U}(y) + Y y \geq 0$ for all $y \leq A(\overline{L})$. Since
\begin{align*}
  \underline{U}'(y) - \overline{U}'(y) + Y = \begin{cases}
                               		Y - \tfrac{1 - \alpha}{\alpha}(\overline{L} - \underline{L}), & \text{if } y \leq A(\overline{L}),\\
                                       Y + \tfrac{1}{\gamma} \cdot \ln \left( \tfrac{y}{A(\underline{L})}\right), & \text{if } A(\overline{L}) < y \leq A(\underline{L}), \\
                                       Y, & \text{if } y > A(\underline{L})
  \end{cases}
\end{align*}
and $y \mapsto \tfrac{1}{\gamma} \cdot \ln \left( \tfrac{y}{A(\underline{L})} \right)$ is strictly increasing, we have $\underline{U}'(y) - \overline{U}'(y) + Y > 0$ for all $y > A(\overline{L})$. Therefore, $\underline{U}(y) - \overline{U}(y) + Y y > 0$ for all $y > A(\overline{L})$, and the integral \eqref{ODE_equivalent_stochastic} is positive. This leads to $\tau = \infty$, and to satisfy this condition, $\overline{y} = 0$.

If $Y < \tfrac{1 - \alpha}{\alpha} (\overline{L} - \underline{L})$, then  the convexity of $y \mapsto \underline{U}(y) - \overline{U}(y)$ ensures the existence of a unique $j \in (A(\overline{L}), \infty)$ such that 
\begin{align}
  \begin{cases}
    \underline{U}(y) - \overline{U}(y) + Y y < 0, & \text{if } y < j, \\
    \underline{U}(y) - \overline{U}(y) + Y y = 0, & \text{if } y = j, \\
    \underline{U}(y) - \overline{U}(y) + Y y > 0, & \text{if } y > j,
  \end{cases} \label{Sign_according_to_j}
\end{align}
(see Figure \ref{uU-oU_Yy} for an illustration of this behavior). Define the stopping time $\tau = \inf \{ s \geq 0 : y_{s} \leq j \}$. For this choice of $\tau$, $\tilde{w}(y) > 0$, which implies that $\overline{y} < \infty$.  
\end{proof}
The following proposition establishes the existence and uniqueness of $\overline{y}$.
\begin{proposition} \label{overline_y<j}
If $Y < \tfrac{1 - \alpha}{\alpha}(\overline{L} - \underline{L})$, the solution $\overline{y}$ of \eqref{Equation_overline_y} exists and is unique satisfying $\overline{y} \in (0, j)$ where $j$ is defined in \eqref{Sign_according_to_j}.
\end{proposition}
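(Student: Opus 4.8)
The plan is to analyze the function $R(\,\cdot\,;Y)$ defined in \eqref{Equation_overline_y} entirely through its monotonicity and boundary behaviour, exploiting the sign information for $y \mapsto \underline{U}(y) - \overline{U}(y) + Y y$ recorded in \eqref{Sign_according_to_j}. Write $g(z) := \underline{U}(z) - \overline{U}(z) + Y z$, so that $R(\overline{y}; Y) = \int_{\overline{y}}^{\infty} z^{- 1 - m_{+}} g(z)\, dz$ and, by \eqref{Sign_according_to_j}, $g(z) < 0$ for $z < j$, $g(j) = 0$, and $g(z) > 0$ for $z > j$.

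First I would check that $R(\overline{y}; Y)$ is well defined and $C^{1}$ for every $\overline{y} > 0$. Convergence at infinity is immediate because $g$ grows at most linearly (it equals $\tfrac{A(\overline{L}) - A(\underline{L})}{\gamma} + Y z$ for $z > A(\underline{L})$ by \eqref{uU-oU}) while $m_{+} > 1$, so $z^{- 1 - m_{+}} g(z)$ is integrable on any interval $(\overline{y}, \infty)$. Differentiating in the lower limit gives
\begin{align*}
  \partial_{\overline{y}} R(\overline{y}; Y) = -\,\overline{y}^{- 1 - m_{+}}\, g(\overline{y}),
\end{align*}
so the sign of the derivative is opposite to that of $g(\overline{y})$. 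Combined with \eqref{Sign_according_to_j}, this shows $R(\,\cdot\,;Y)$ is strictly increasing on $(0, j)$ and strictly decreasing on $(j, \infty)$, with a global maximum at $\overline{y} = j$.

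Next I would pin down the two boundary values that drive an intermediate value argument on $(0, j)$. For the right endpoint, $R(j; Y) = \int_{j}^{\infty} z^{- 1 - m_{+}} g(z)\, dz > 0$ because $g > 0$ on $(j, \infty)$; the same positivity shows $R(\,\cdot\,;Y) > 0$ on all of $[j, \infty)$, since there it decreases from $R(j;Y)$ down to $\lim_{\overline{y} \to \infty} R(\overline{y};Y) = 0$, so no root can lie in $[j,\infty)$. For the left endpoint, the hypothesis $Y < \tfrac{1 - \alpha}{\alpha}(\overline{L} - \underline{L})$ is precisely what makes $g(z) = \bigl(Y - \tfrac{1 - \alpha}{\alpha}(\overline{L} - \underline{L})\bigr) z < 0$ for $z \le A(\overline{L})$, so the leading behaviour of $z^{- 1 - m_{+}} g(z)$ near the origin is a negative constant times $z^{- m_{+}}$, which is non-integrable at $0$ since $m_{+} > 1$; hence $\lim_{\overline{y} \to 0^{+}} R(\overline{y}; Y) = -\infty$.

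Finally, since $R(\,\cdot\,;Y)$ is continuous and strictly increasing on $(0, j)$ with $\lim_{\overline{y} \to 0^{+}} R = -\infty$ and $R(j; Y) > 0$, the intermediate value theorem produces a root and strict monotonicity makes it unique; together with the absence of roots on $[j, \infty)$ this yields a unique $\overline{y} \in (0, j)$ solving \eqref{Equation_overline_y}. I expect the left-endpoint limit to be the main obstacle: I must argue that the non-integrable negative singularity of $z^{- 1 - m_{+}} g(z)$ at the origin genuinely forces $R \to -\infty$ and is not cancelled by the bounded, integrable contribution from the region $z > A(\overline{L})$, and it is here that the hypothesis on $Y$ enters in an essential way.
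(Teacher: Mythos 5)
Your proposal is correct and follows essentially the same route as the paper: establish $\lim_{y \downarrow 0} R(y;Y) < 0 < R(j;Y)$, apply the intermediate value theorem on $(0,j)$, use strict monotonicity of $R(\cdot\,;Y)$ on $(0,j)$ for uniqueness, and note $R > 0$ on $[j,\infty)$ to exclude roots there. Your write-up merely fills in details the paper leaves implicit (the derivative formula $\partial_{\overline{y}} R = -\overline{y}^{-1-m_+} g(\overline{y})$, the integrability checks, and the fact that the limit at $0^+$ is actually $-\infty$ via the non-integrable $z^{-m_+}$ singularity), all of which are sound.
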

\begin{proof}
If $Y < \tfrac{1 - \alpha}{\alpha} \cdot (\overline{L} - \underline{L})$, we observe that $\lim_{y \downarrow 0} R(y; Y) < 0 < R(j; Y)$. By the intermediate value theorem, there exists $\overline{y} \in (0, j)$ such that $\overline{y}$ solves \eqref{Equation_overline_y}. The function $y \mapsto R(y; Y)$ is strictly increasing in $(0, j)$, ensuring that \(\overline{y}\) is unique. Additionally, for $y \geq j$, $R(y; Y) > 0$, which implies that $\overline{y} \in [j, \infty)$ is impossible.
\end{proof}
Using \proref{overline_y_zero_condition} and \proref{overline_y<j}, we deduce the following corollary:
\begin{corollary}
$\overline{y} = 0$ (i.e the agent never retires) if and only if $Y \geq \tfrac{1 - \alpha}{\alpha}(\overline{L} - \underline{L})$.
\end{corollary}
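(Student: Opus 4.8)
The plan is to obtain this corollary as a purely logical combination of the two preceding propositions, since all of the analytic work has already been done. The key structural observation is that the threshold inequality partitions the parameter space into two mutually exclusive and jointly exhaustive regimes, namely $Y \geq \tfrac{1-\alpha}{\alpha}(\overline{L}-\underline{L})$ and $Y < \tfrac{1-\alpha}{\alpha}(\overline{L}-\underline{L})$, and each proposition pins down the value of $\overline{y}$ in exactly one of these regimes. I would also record at the outset the translation between $\overline{y}$ and retirement behavior: since $\tau = \inf\{s \geq 0 : y_s \leq \overline{y}\}$ and the process $y_s$ stays strictly positive, $\overline{y} = 0$ is equivalent to $\tau = \infty$, i.e., the agent never retires.

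For the sufficiency direction I would simply invoke the second assertion of \proref{overline_y_zero_condition}: if $Y \geq \tfrac{1-\alpha}{\alpha}(\overline{L}-\underline{L})$, then $\overline{y} = 0$, which by the remark above means the agent never retires. For the necessity direction I would argue by contraposition. Assume $Y < \tfrac{1-\alpha}{\alpha}(\overline{L}-\underline{L})$. Then \proref{overline_y<j} guarantees that the unique solution of \eqref{Equation_overline_y} satisfies $\overline{y} \in (0, j)$, hence $\overline{y} > 0$; consequently $\overline{y} = 0$ is impossible under this strict inequality. This is precisely the contrapositive of the statement that $\overline{y} = 0$ forces $Y \geq \tfrac{1-\alpha}{\alpha}(\overline{L}-\underline{L})$.

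Combining the two directions yields the desired biconditional. I do not expect any genuine obstacle in this argument, as both halves are immediate restatements of the propositions; the only point requiring a moment of care is to confirm that the two regimes are exactly the strict and non-strict halves of a single threshold, so that the equivalence is tight and no boundary case is left unaccounted for. Since \proref{overline_y_zero_condition} covers $Y \geq \tfrac{1-\alpha}{\alpha}(\overline{L}-\underline{L})$ with $\overline{y} = 0$ and \proref{overline_y<j} covers the complementary strict inequality with $\overline{y} > 0$, the two propositions dovetail exactly and the corollary follows.
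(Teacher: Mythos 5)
Your proposal is correct and takes essentially the same approach as the paper: the paper deduces the corollary directly by combining \proref{overline_y_zero_condition} (which gives $\overline{y}=0$ whenever $Y \geq \tfrac{1-\alpha}{\alpha}(\overline{L}-\underline{L})$) with \proref{overline_y<j} (which gives $\overline{y}\in(0,j)$, hence $\overline{y}>0$, in the complementary regime $Y < \tfrac{1-\alpha}{\alpha}(\overline{L}-\underline{L})$). Your contrapositive phrasing and the remark that $\overline{y}=0$ corresponds to $\tau=\infty$ merely spell out explicitly what the paper leaves implicit.
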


We observe that $y \mapsto R(y; Y)$ is continuous in $(0, \infty)$. Since $\underline{U}(y) - \overline{U}(y)$ is independent of $Y$ and $Y \mapsto R(y; Y)$ is continuous and strictly increasing with 
\begin{align*}
  \lim_{Y \downarrow 0} R(y; Y) < 0 < \lim_{Y \rightarrow \infty} R(y; Y)
\end{align*}
for fixed $y \in (0, \infty)$ (these limits are valid because the monotone convergence theorem allows us to exchange the limits with the integral). As a result, we conclude that  ``\textit{$Y \mapsto \overline{y}$ is a strictly decreasing function for $Y \in \left( 0, \tfrac{1 - \alpha}{\alpha}(\overline{L} - \underline{L}) \right)$}''. In particular, the intermediate value theorem guarantees the existence of unique values $0 < Y_{2} < Y_{1} < \infty$ such that 
\begin{align*}
  0 = R(A(\overline{L}); Y_{1}) = R(A(\underline{L}); Y_{2}).
\end{align*}

\begin{figure}[t]
		\begin{center}
			\includegraphics[width=0.45\textwidth]{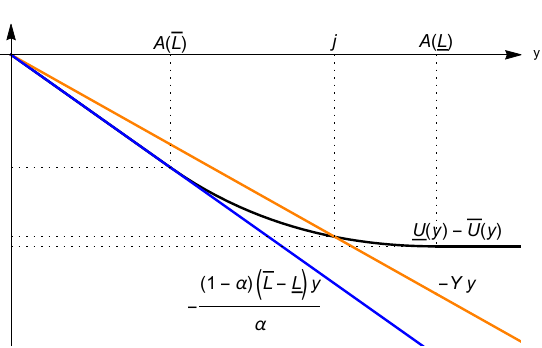}
	\end{center}
	 \caption{The graph shows $\underline{U}(y) - \overline{U}(y)$ (black), $-Y y$ (orange), $- \tfrac{1 - \alpha}{\alpha} \cdot (\overline{L} - \underline{L}) y$ (blue), and the intersection of black and orange (denoted by $j$) as functions of $y$. In this case, $Y < \tfrac{1 - \alpha}{\alpha} \cdot (\overline{L} - \underline{L})$.
		}
		\label{uU-oU_Yy}
\end{figure}

The location of $\overline{y}$ depends on the value of $Y$. Specifically,
\begin{align}
  \overline{y}= \; \begin{cases}
                0, & \text{if} \quad Y \geq \tfrac{1 - \alpha}{\alpha} (\overline{L} - \underline{L}), \\
                y_{1} \in (0, A(\overline{L})), & \text{if} \quad Y_{1} < Y < \tfrac{1 - \alpha}{\alpha}  (\overline{L} - \underline{L}), \\
                A(\overline{L}), & \text{if} \quad Y = Y_{1}, \\
                y_{2} \in (A(\overline{L}), A(\underline{L})), & \text{if} \quad Y_{2} < Y < Y_{1}, \\
                 A(\underline{L}), & \text{if} \quad Y = Y_{2}, \\
                 \tfrac{(m_{+} - 1) [A(\underline{L}) - A(\overline{L})]}{\gamma m_{+} Y}, & \text{if} \quad 0 < Y < Y_{2},
  \end{cases} \label{overline_y}
\end{align}
where $y_{1}$ and $y_{2}$ satisfy the equations
\begin{equation}  \label{oy_small_equation}
  \begin{split}
  y_1 & = \left[\dfrac{A(\underline{L})^{1-m_+}-A(\overline{L})^{1-m_+}}{\gamma m_{+} (m_{+} - 1)\left\{ Y - \frac{1 - \alpha}{\alpha} (\overline{L} - \underline{L}) \right\}}\right]^{\frac{1}{1-m_+}}\\
  0 & = \left[ \dfrac{Y}{m_{+} - 1} + \dfrac{2 - m_{+}}{\gamma (m_{+} - 1)^{2}}-\dfrac{ \ln \left( \frac{A(\underline{L})}{y_{2}} \right)}{\gamma (m_{+} - 1)} \right] \left( \frac{A(\underline{L})}{y_{2}} \right)^{m_{+} - 1} + \frac{A(\overline{L})}{\gamma m_{+} A(\underline{L})} \left( \dfrac{A(\underline{L})}{y_{2}} \right)^{m_{+}} - \dfrac{1}{\gamma m_{+} (m_{+} - 1)^{2}}.
  \end{split}
\end{equation}
Finally, it remains to verify that $\tilde{w}(y)$ satisfies \eqref{ODE_equivalent}. This step completes the analysis of the problem. 
\begin{proposition}
The function $\tilde{w}$, introduced in \eqref{Tilde_y}, belongs to $C^{1}((0, \infty)) \cap C^{2}((0, \infty) \setminus \{ \overline{y} \})$ and solves \eqref{ODE_equivalent}.
\end{proposition}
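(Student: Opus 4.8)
The plan is to verify the four clauses of the variational inequality \eqref{ODE_equivalent} one at a time for the explicit candidate \eqref{Tilde_y}, treating the never-retire case $\overline{y}=0$ and the genuine free-boundary case $\overline{y}\in(0,j)$ in parallel. Throughout I write $g(y):=\underline{U}(y)-\overline{U}(y)+Yy$, which by the discussion following \eqref{uU-oU} is $C^{1}$ on $(0,\infty)$ with bounded, non-decreasing derivative, behaves like a constant-plus-linear term $O(Yy)$ as $y\to\infty$, and behaves like a pure linear term as $y\downarrow 0$; these growth rates guarantee that the two improper integrals in \eqref{Tilde_y} converge (using $m_{+}>1$ at infinity and $m_{-}<0$ at the origin). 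The clause $\tilde{w}=0$ on $(0,\overline{y}]$ holds by definition.

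First I would establish the ODE on $(\overline{y},\infty)$ and the regularity simultaneously. The functions $y^{m_{+}}$ and $y^{m_{-}}$ span the kernel of $Lw:=\tfrac12\theta^{2}y^{2}w''+(\beta-r)yw'-\beta w$, since substituting $y^{m}$ into $L$ returns $\tfrac12 f(m)\,y^{m}$ with $f$ the quadratic from \eqref{Relation_beta_r}. A direct differentiation of the particular part $P(y):=y^{m_{+}}\int_{y}^{\infty}z^{-1-m_{+}}g\,dz+y^{m_{-}}\int_{0}^{y}z^{-1-m_{-}}g\,dz$ shows that the two boundary terms produced by $P'$ cancel, that $P'$ and $P''$ are continuous, and that $LP=\tfrac12\theta^{2}(m_{-}-m_{+})g$; multiplying by $\tfrac{2}{\theta^{2}(m_{+}-m_{-})}$ and adding $Cy^{m_{-}}\in\ker L$ yields $L\tilde{w}+g=0$ on $(\overline{y},\infty)$, the first line of \eqref{ODE_equivalent}. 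Because $g\in C^{1}$, the integrals are $C^{2}$ and $\tilde{w}\in C^{2}$ (indeed $C^{3}$) on $(\overline{y},\infty)$, and since $g$ is merely $C^{1}$ at $A(\overline{L})$ and $A(\underline{L})$ the function $\tilde{w}$ still remains $C^{2}$ across these interior kinks. The defining conditions \eqref{Equation_overline_y} are exactly the smooth-pasting requirements $\tilde{w}(\overline{y})=\tilde{w}'(\overline{y})=0$, so $\tilde{w}$ glues in a $C^{1}$ fashion to the zero branch; the second derivative, however, jumps at $\overline{y}$ because $\tfrac12\theta^{2}\overline{y}^{2}\tilde{w}''(\overline{y}^{+})=-g(\overline{y})\neq 0$, which is precisely why the regularity is $C^{1}((0,\infty))\cap C^{2}((0,\infty)\setminus\{\overline{y}\})$.

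It then remains to check the two sign clauses. On $(0,\overline{y}]$ we have $\tilde{w}\equiv 0$, so the left-hand side of the second line of \eqref{ODE_equivalent} reduces to $g(y)$; since $\overline{y}<j$ by \proref{overline_y<j} and $g<0$ on $(0,j)$ by \eqref{Sign_according_to_j}, the required inequality $g\le 0$ holds (the clause is vacuous when $\overline{y}=0$). The genuinely substantive clause is $\tilde{w}\ge 0$ on $(\overline{y},\infty)$. For $\overline{y}=0$ one has $g\ge 0$ everywhere, so both integrals in \eqref{Tilde_y} are non-negative and $\tilde{w}\ge 0$ immediately. For $\overline{y}\in(0,j)$ I would differentiate \eqref{Tilde_y}, use $R(\overline{y};Y)=0$ to write $\tilde{w}'(y)=\tfrac{2}{\theta^{2}(m_{+}-m_{-})}\big[m_{+}y^{m_{+}-1}R(y;Y)+m_{-}y^{m_{-}-1}\int_{\overline{y}}^{y}z^{-1-m_{-}}g\,dz\big]$, and observe from $R(\overline{y};Y)=0$ together with the sign of $g$ that $R(y;Y)>0$ for every $y>\overline{y}$; hence on $(\overline{y},j)$ both summands are positive and $\tilde{w}'>0$, forcing $\tilde{w}>0$ on $(\overline{y},j]$.

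The main obstacle is extending positivity past $j$, where $\tilde{w}'$ may change sign. Here I would run a maximum-principle argument against the ODE: since $g$ is eventually linear with positive slope, the integral asymptotics give $\tilde{w}(y)\to+\infty$, so if $\tilde{w}$ were negative somewhere it would attain a negative minimum at an interior point $y_{0}$, necessarily with $y_{0}>j$. At such a point $\tilde{w}'(y_{0})=0$, $\tilde{w}''(y_{0})\ge 0$ and $g(y_{0})>0$, whereas the ODE forces $\tfrac12\theta^{2}y_{0}^{2}\tilde{w}''(y_{0})=\beta\tilde{w}(y_{0})-g(y_{0})<0$, a contradiction. This rules out negative values and yields $\tilde{w}\ge 0$, completing the verification of all four clauses of \eqref{ODE_equivalent}.
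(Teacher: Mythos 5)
Your proposal is correct, and while it shares the paper's overall skeleton (the two equalities in \eqref{ODE_equivalent} hold by construction, and the inequality on $(0,\overline{y}]$ reduces, exactly as in the paper, to $\underline{U}-\overline{U}+Yy\le 0$ via $\overline{y}<j$ and \eqref{Sign_according_to_j}), the decisive clause $\tilde{w}\ge 0$ on $(\overline{y},\infty)$ is handled by a genuinely different argument. The paper substitutes the value of $C$ from \eqref{Equation_overline_y} into \eqref{Tilde_y} and observes that the quotient $y\mapsto \tilde{w}(y)/y^{m_{-}}$ has derivative proportional to $y^{m_{+}-m_{-}-1}R(y;Y)$ (the local terms cancel exactly); since $R(\overline{y};Y)=0$ and the sign pattern of $g:=\underline{U}-\overline{U}+Yy$ give $R(y;Y)>0$ for every $y>\overline{y}$, the quotient increases strictly from its value $0$ at $\overline{y}$, yielding positivity on the whole half-line in a single stroke, with no asymptotic analysis. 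You use the same key fact $R(y;Y)>0$, but only on $(\overline{y},j)$, where it makes both summands of $\tilde{w}'$ positive (the second because $m_{-}<0$ and $g<0$ there), and you then cross the point $j$ with a minimum-principle contradiction, which obliges you to also justify $\tilde{w}(y)\to+\infty$ as $y\to\infty$; that growth claim is true (indeed $\tilde{w}(y)\sim Yy/r$, by the tail behavior $g(y)\sim Yy$ together with \eqref{Relation_beta_r}), so your argument closes, but it is the longer route, trading the paper's ad hoc integrating factor $y^{-m_{-}}$ for a more standard and transferable PDE-style argument. On the other hand, your write-up is more complete than the paper's on the parts the paper asserts silently: you actually verify by variation of parameters that $L\tilde{w}+g=0$ on $(\overline{y},\infty)$, you check convergence of the improper integrals from the growth of $g$, and you identify precisely why the regularity is $C^{1}((0,\infty))\cap C^{2}((0,\infty)\setminus\{\overline{y}\})$ and no better, via the genuine jump $\tfrac{1}{2}\theta^{2}\overline{y}^{2}\tilde{w}''(\overline{y}^{+})=-g(\overline{y})>0$ against $\tilde{w}''(\overline{y}^{-})=0$ — a point the paper's proof does not address at all.
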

\begin{proof}
Since \eqref{Tilde_y} satisfies both equalities in \eqref{ODE_equivalent}, it suffices to verify the following conditions:
\begin{align}
  &\tfrac{1}{2}\theta^{2} y^{2} \tilde{w}''(y) + (\beta - r) y \tilde{w}'(y) - \beta \tilde{w}(y) + \underline{U}(y) - \overline{U}(y) + Y y \leq 0,  &~ \mbox{for~} 0 < y \leq \overline{y}, \label{Check_1} \\
  &\tilde{w}(y)\geq 0, &~  \mbox{for~} y > \overline{y}. \label{Check_2}
\end{align}
To verify \eqref{Check_1}, first observe that $\tilde{w}(y) = 0$ for $y \leq \overline{y}$. Substituting this into \eqref{Check_1}, the inequality reduces to $\underline{U}(y) - \overline{U}(y) + Y y \leq 0$ for $y \leq \overline{y}$. This holds true by combining \eqref{Sign_according_to_j} with \proref{overline_y<j}.

 Next, to verify \eqref{Check_2}, restrict $y > \overline{y}$ and rewrite \eqref{Tilde_y} using the value of $C$ in \eqref{Equation_overline_y} as follows:
\begin{align*}
   \tfrac{\theta^{2} (m_{+} - m_{-}) \tilde{w}(y)}{2 y^{m_{-}}} &= y^{m_{+} - m_{-}} \int_{y}^{\infty} z^{- 1 - m_{+}} \left\{\underline{U}(z) - \overline{U}(z) + Y z\right\} d z + \int_{\overline{y}}^{y} z^{- 1 - m_{-}} \left\{\underline{U}(z) - \overline{U}(z) + Y z\right\} d z \\
  & \Rightarrow \tfrac{d}{d y} \left( \tfrac{\theta^{2} \tilde{w}(y)}{2 y^{m_{-}}} \right) = y^{m_{+} - m_{-} - 1} \int_{y}^{\infty} z^{- 1 - m_{+}} \left\{\underline{U}(z) - \overline{U}(z) + Y z\right\} d z > 0,
\end{align*}
where the positivity follows from \eqref{Equation_overline_y} and \eqref{Sign_according_to_j}. Since $\tilde{w}(\overline{y}) = 0$, the above implies that $\tilde{w}(y) > 0 $ for $ y > \overline{y}$.
\end{proof}


\section{Benchmark case (Merton problem)} \label{sec:merton}

We consider the Merton problem to compare this with our model. In this setup, the agent has leisure $\underline{L}$ and income $Y$ throughout her lifetime (i.e., there is no retirement). The total wealth process $X_{t}$ and the value function $V_{\Mer}$ are defined as follows:  
\begin{align*}
  d X_{t} & = \left\{ (\mu - r) \pi_{t} + r X_{t} - c_{t} + Y \right\} d t + \sigma \pi_{t} d B_{t}, \\
  V_{\Mer}(x) & = \sup_{(c, \pi)} \mathbb{E}^{x} \left[ \int_{0}^{\infty} e^{- \beta s} \left( - \tfrac{e^{- \gamma c_{s}}}{\gamma} \cdot A(\underline{L}) \right) d s \right].
\end{align*}
Using a similar approach to the pre-retirement and post-retirement cases, we derive the following lemma:
\begin{lemma} \label{Mer_value_sign}
For the benchmark case, the value function $V_{\Mer}$, its dual $\tilde{V}_{\Mer}$, the total wealth $X_{t}^{\Mer}$, optimal consumption $c_{t}^{\Mer}$, and optimal portfolio $\pi_{t}^{\Mer}$ are given by
\begin{equation*}
  \begin{split}
  & \tilde{V}_{\Mer}(y) = - \tfrac{1}{\gamma} \; \mathbb{E}^{y} \left[ \int_{0}^{\infty} e^{- \beta s} \left\{ y_{s} \left( \ln \left( \tfrac{A(\underline{L})}{y_{s}} \right) + 1 \right) \mathbf{1}_{ \{ y_{s} \leq A(\underline{L}) \} } + A(\underline{L}) \cdot \mathbf{1}_{ \{ y_{s} > A(\underline{L}) \} } \right\} d s \right] + \tfrac{Y}{r} y, \\
  & V_{\Mer}(x) = \inf_{y > 0} (\tilde{V}_{\Mer}(y) + y x), \\
  & ( X_{t}^{\Mer}, \pi_{t}^{\Mer}, c_{t}^{\Mer} ) = \left( - \tilde{V}_{\Mer}'(y_{t}), \tfrac{\theta y_{t} \tilde{V}_{\Mer}''(y_{t})}{\sigma}, \tfrac{1}{\gamma} \left( \ln \left( \tfrac{A(\underline{L})}{y_{t}} \right) \right)^{+} \right).
  \end{split}
\end{equation*}
The closed-form expressions for $\tilde{V}_{\Mer}$, $\tilde{V}_{\Mer}'$, $\tilde{V}_{\Mer}''$ are 
\begin{equation}  \label{Mer_value}
  \begin{split}
  \tilde{V}_{\Mer}(y) & = \tfrac{Y}{r} y + \begin{cases}
   \frac{m_{-} - 1}{\gamma r m_{+} (m_{+} - 1) (m_{+} - m_{-}) A(\underline{L})^{m_{+} - 1}} y^{m_{+}} + \left( \frac{\beta - r + \frac{\theta^{2}}{2}}{\gamma r^{2}} - \frac{\ln (A(\underline{L})) + 1}{\gamma r} \right) y + \frac{y \ln (y)}{\gamma r} \quad \text{if } y \leq A(\underline{L}), \\
   \frac{m_{+} - 1}{\gamma r m_{-} (m_{-} - 1) (m_{+} - m_{-}) A(\underline{L})^{m_{-} - 1}} y^{m_{-}} - \frac{A(\underline{L})}{\gamma \beta} \qquad \qquad \qquad \qquad \qquad \qquad \quad \;\; \text{if } y > A(\underline{L}),
   \end{cases} \\
  \tilde{V}_{\Mer}'(y) & = \tfrac{Y}{r} + \begin{cases}
   \frac{m_{-} - 1}{\gamma r (m_{+} - 1) (m_{+} - m_{-})} \left( \frac{A(\underline{L})}{y} \right)^{1 - m_{+}} + \frac{\beta - r + \frac{\theta^{2}}{2}}{\gamma r^{2}} - \frac{\ln \left( \frac{A(\underline{L})}{y} \right)}{\gamma r} \qquad \quad \text{if } y \leq A(\underline{L}), \\
   \frac{m_{+} - 1}{\gamma r (m_{-} - 1) (m_{+} - m_{-})} \left( \frac{A(\underline{L})}{y} \right)^{1 - m_{-}} \qquad \qquad \qquad \qquad \qquad \qquad \text{if } y > A(\underline{L}),
   \end{cases} \\
  y \tilde{V}_{\Mer}''(y) & = \begin{cases}
   \frac{m_{-} - 1}{\gamma r (m_{+} - m_{-})} \left( \frac{A(\underline{L})}{y} \right)^{1 - m_{+}} + \frac{1}{\gamma r} \qquad \qquad \quad \; \text{if } y \leq A(\underline{L}), \\
   \frac{m_{+} - 1}{\gamma r (m_{+} - m_{-})} \left( \frac{A(\underline{L})}{y} \right)^{1 - m_{-}} \qquad \qquad \qquad \quad \; \text{if } y > A(\underline{L}),
   \end{cases}
   \end{split}
\end{equation}
and $\tilde{V}_{\Mer} \in C^{3}((0, \infty))$ with $\tilde{V}_{\Mer}(y) < \frac{Y}{r} y$, $\tilde{V}_{\Mer}'(y) < \frac{Y}{r}$, $\tilde{V}_{\Mer}''(y) > 0$ for all $y > 0$.
\end{lemma}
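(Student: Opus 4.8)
The plan is to re-run the entire post-retirement analysis of Section~\ref{sec:post_value} essentially verbatim, with two changes: the post-retirement leisure $\overline{L}$ (hence $A(\overline{L})$) is replaced by the pre-retirement level $\underline{L}$ (hence $A(\underline{L})$), and the income $Y$ is now received over the whole infinite horizon rather than ceasing at a stopping time. The observation that makes this almost free is the decomposition $\tilde{V}_{\Mer}(y) = g(y) + \tfrac{Y}{r} y$, where $g(y) := \mathbb{E}^{y}\big[\int_{0}^{\infty} e^{-\beta s}\underline{U}(y_{s})\,ds\big]$ is \emph{literally} the post-retirement dual $\tilde{V}_{\post}$ of \eqref{Post_value} with $A(\overline{L})$ replaced by $A(\underline{L})$. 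Every qualitative statement about $\tilde{V}_{\Mer}$ will thus be inherited from the corresponding statement about $\tilde{V}_{\post}$, shifted by the linear term $\tfrac{Y}{r}y$.

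First I would derive the budget constraint. Applying It\^o's lemma to $H_{t}X_{t}$ exactly as in \eqref{ito_for_HX_pre} but over the infinite horizon with no stopping time, the constant income contributes the martingale $Y\big(\tfrac{H_{t}}{r} + \int_{0}^{t}H_{s}\,ds\big)$; the supermartingale property and Fatou's lemma then give $\mathbb{E}^{x}\big[\int_{0}^{\infty}H_{s}c_{s}\,ds\big]\le x+\tfrac{Y}{r}$, the infinite-horizon analogue of \eqref{Budget_constraint_pre}. The Lagrangian step is identical to the post-retirement one: for fixed $y>0$, using $y_{s}=ye^{\beta s}H_{s}$ and the definition of $\underline{U}$ from Section~\ref{sec:original}, one obtains $V_{\Mer}(x)-yx \le \mathbb{E}^{y}\big[\int_{0}^{\infty}e^{-\beta s}\underline{U}(y_{s})\,ds\big]+\tfrac{Y}{r}y=:\tilde{V}_{\Mer}(y)$. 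A direct computation with $\mathbb{E}[y_{s}]=ye^{(\beta-r)s}$ shows $\mathbb{E}^{y}\big[\int_{0}^{\infty}e^{-\beta s}Yy_{s}\,ds\big]=\tfrac{Y}{r}y$, so the income term may be kept separate as above, which is what exhibits $g$ as a pure relabelling of $\tilde{V}_{\post}$.

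Next I would produce the closed form and the regularity/sign claims. By Feynman--Kac, $g$ solves \eqref{ODE_post} with $\overline{U}$ replaced by $\underline{U}$; since $\mathcal{L}[\tfrac{Y}{r}y]=-Yy$ for the operator $\mathcal{L}[v]=\tfrac12\theta^{2}y^{2}v''+(\beta-r)yv'-\beta v$, the full dual $\tilde{V}_{\Mer}$ satisfies the same ODE with the extra source $+Yy$. Solving piecewise on $\{y\le A(\underline{L})\}$ and $\{y>A(\underline{L})\}$, matching $C^{1}$ at $A(\underline{L})$, and using \eqref{Relation_beta_r} reproduces \eqref{Mer_value}, which is exactly \eqref{Post_value} shifted by $\tfrac{Y}{r}y$ with $\underline{L}$ in place of $\overline{L}$. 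The regularity and signs then follow from Lemma~\ref{Post_value_sign} applied to $g$: because $\tfrac{Y}{r}y$ is linear, $\tilde{V}_{\Mer}''=g''>0$, $\tilde{V}_{\Mer}'-\tfrac{Y}{r}=g'<0$, $\tilde{V}_{\Mer}-\tfrac{Y}{r}y=g<0$, and $\tilde{V}_{\Mer}=g+\tfrac{Y}{r}y\in C^{3}((0,\infty))$, giving all three strict inequalities for every $y>0$.

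Finally I would verify the candidate controls. From $V_{\Mer}(x)=\inf_{y>0}(\tilde{V}_{\Mer}(y)+yx)$ the first-order condition is $X_{t}=-\tilde{V}_{\Mer}'(y_{t})$; applying It\^o (legitimate since $\tilde{V}_{\Mer}\in C^{3}$) and matching the diffusion coefficient in \eqref{X_dynamics} with income $Y$ present yields $\pi_{t}=\tfrac{\theta y_{t}\tilde{V}_{\Mer}''(y_{t})}{\sigma}$, while matching the drift reduces to the ODE satisfied by $\tilde{V}_{\Mer}$ --- here the source term $+Yy$ is exactly what absorbs the $+Y$ in the wealth drift. The consumption $c_{t}=\tfrac1\gamma\big(\ln(A(\underline{L})/y_{t})\big)^{+}$ is the maximizer defining $\underline{U}$. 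The transversality conditions $\lim_{t\to\infty}\mathbb{E}^{x}[H_{t}X_{t}^{\Mer}]=0$ and $\lim_{t\to\infty}\mathbb{E}^{x}[e^{-\beta t}\tilde{V}_{\Mer}(y_{t})]=0$ follow as in Lemma~\ref{Post_vanish} from the uniform bounds on $|\tilde{V}_{\Mer}|,|y\tilde{V}_{\Mer}'|,|y^{2}\tilde{V}_{\Mer}''|$, together with $\mathbb{E}[H_{t}]=e^{-rt}\to0$ to control the additional $\tfrac{Y}{r}$ piece of $X_{t}^{\Mer}=-g'(y_{t})-\tfrac{Y}{r}$. I expect the only non-routine point to be the consistent bookkeeping of the income across the budget constraint ($+\tfrac{Y}{r}$), the dual ODE source ($+Yy$), and the drift matching; there is no new estimate here, so the main obstacle is purely ensuring these shifts land in the right places rather than any substantive analytic difficulty.
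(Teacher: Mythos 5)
Your proposal is correct and follows exactly the route the paper intends: the paper gives no separate proof for this lemma, stating only that it follows ``using a similar approach to the pre-retirement and post-retirement cases,'' and your decomposition $\tilde{V}_{\Mer}(y)=g(y)+\tfrac{Y}{r}y$ with $g$ the post-retirement dual relabelled via $A(\overline{L})\to A(\underline{L})$ is precisely that argument made explicit (budget constraint with the annuity term $x+\tfrac{Y}{r}$, the $+Yy$ source in the dual ODE absorbing the income in the drift matching, and signs/regularity inherited from Lemma~\ref{Post_value_sign} and Lemma~\ref{Post_vanish}). Your bookkeeping of the income shifts is consistent throughout, so nothing is missing relative to the paper's treatment.
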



\section{Verification and summary} \label{sec:Verification}

\begin{theorem}[Verification of the value function]
The functions $V_{\post}$ in \eqref{post-retirement} and $V_{\pre}$ in \eqref{pre-retirement} are indeed the value functions for post-retirement and pre-retirement satisfying \eqref{Post_value_define} and \eqref{Pre_value_define}, respectively.
\end{theorem}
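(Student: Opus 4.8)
The plan is to prove each identity of the form $V_{\bullet}(x) = \inf_{y>0}\bigl(\tilde{V}_{\bullet}(y) + yx\bigr)$ by playing the weak-duality upper bound already assembled in the excerpt against an explicitly constructed attaining strategy. The upper bounds are in hand: for post-retirement, the Lagrangian inequality \eqref{Lagrange} reads $V_{\post}(x) - yx \leq \tilde{V}_{\post}(y)$, so taking the infimum over $y > 0$ gives $V_{\post}(x) \leq \inf_{y>0}(\tilde{V}_{\post}(y) + yx)$; the pre-retirement bound comes from the analogous Lagrangian estimate built on the budget constraint \eqref{Budget_constraint_pre} and the dual utility $\underline{U}$, as recorded in \eqref{Tilde_V_upper}. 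What remains is to exhibit admissible controls that turn both the pointwise dual-utility inequality and the budget inequality into equalities.

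For the post-retirement case I would fix $x$, use the strict monotonicity of $-\tilde{V}_{\post}'$ from \lemref{Post_value_sign} to set $y_0 := (-\tilde{V}_{\post}')^{-1}(x)$, and take the candidate plan $(c^{\post}, \pi^{\post})$ of \eqref{post-retirement} driven by $y_t = y_0 e^{(\beta - r - \theta^2/2)t - \theta B_t}$. Two equalities must be checked. First, $c^{\post}_s$ is by construction the maximizer in the definition of $\overline{U}$, so the dual-utility inequality $U(c^{\post}_s, \overline{L}) - c^{\post}_s y_s \leq \overline{U}(y_s)$ holds with equality pointwise. Second, the budget constraint must bind: inserting $X^{\post}_t = -\tilde{V}_{\post}'(y_t) \geq 0$ (nonnegativity from \lemref{Post_value_sign}) into the It\^o identity \eqref{ito_for_HX_post} shows that the nonnegative local martingale there is in fact a true martingale precisely because $\lim_{t\to\infty}\mathbb{E}^x[H_t X^{\post}_t] = 0$, which is \lemref{Post_vanish}. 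Admissibility in the sense of \eqref{admissible_condition} follows from the same nonnegativity together with the uniform bounds of \lemref{Post_value_sign}(iii). Integrating the two equalities collapses the Lagrangian bound to $V_{\post}(x) = \tilde{V}_{\post}(y_0) + y_0 x$, which is the desired attainment.

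The pre-retirement identity rests on the same duality skeleton, with the retirement decision layered on top. After fixing $y_0$ by $x = -\tilde{V}_{\pre}'(y_0)$ and setting $\tau^* = \inf\{s \geq 0 : y_s \leq \overline{y}\}$ together with the plan $(c^{\pre}, \pi^{\pre})$ of \eqref{pre-retirement}, the consumption is again the maximizer of $\underline{U}$, and the budget \eqref{Budget_constraint_pre} binds once its optional-sampling inequality is upgraded to equality through the uniform bounds of \lemref{Pre_value_sign}(iii); these produce equality in the running and terminal terms of \eqref{Tilde_V_upper}. The optimality of the stopping rule $\tau^*$ is not re-derived here, since it is precisely the content of the concluding Proposition of Section~\ref{sec:solution}: there $\tilde{w}$ is shown to solve the variational inequality \eqref{ODE_equivalent} and to agree with the optimal-stopping value \eqref{ODE_equivalent_stochastic}, which is attained at $\tau^* = \inf\{s : y_s \leq \overline{y}\}$. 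Combining the two duality equalities with this stopping optimality collapses \eqref{Tilde_V_upper} to an identity, while the post-retirement continuation appearing in \eqref{Original_problem_another} is controlled by the post-retirement identity proven in the previous step.

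The step I expect to be the main obstacle is upgrading the budget constraints from inequalities to equalities, i.e.\ securing the transversality limits $\lim_{t}\mathbb{E}^x[H_t X_t] = 0$ and $\lim_{t}\mathbb{E}^x[e^{-\beta t}\tilde{V}_{\bullet}(y_t)] = 0$, now in the presence of a stopping time. The optional-sampling argument behind \eqref{Budget_constraint_pre} only yields ``$\leq$'', and closing the gap requires uniform integrability of the stopped family. I would obtain this from the uniform estimates $\sup_{y>0}(|\tilde{V}_{\bullet}(y)| + |y\tilde{V}_{\bullet}'(y)| + |y^2\tilde{V}_{\bullet}''(y)|) < \infty$ of \lemref{Post_value_sign}(iii) and \lemref{Pre_value_sign}(iii), combined with the identity $H_t = e^{-\beta t} y_t / y_0$ that converts the uniform bound on $|y\tilde{V}_{\bullet}'(y)|$ into genuine exponential decay, exactly as in the unstopped computation of \lemref{Post_vanish}; restricting to $\{t < \tau^*\}$ only shrinks the relevant domain, so the same bounds remain in force.
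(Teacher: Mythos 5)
Your overall architecture --- weak duality from \eqref{Lagrange} and \eqref{Tilde_V_upper}, then attainment by the candidate plan built from the first-order condition $x=-\tilde{V}_{\post}'(y_0)$ --- is the same as the paper's, and your reduction of everything to the binding of the budget constraint is exactly the paper's reduction. But the step where you close the budget constraint contains a genuine logical error. You assert that the nonnegative local martingale on the right-hand side of \eqref{ito_for_HX_post} ``is in fact a true martingale precisely because $\lim_{t\to\infty}\mathbb{E}^{x}[H_{t}X_{t}^{\post}]=0$,'' i.e.\ because of \lemref{Post_vanish}. That implication is false: a nonnegative local martingale is in general only a supermartingale, and the vanishing of $\mathbb{E}^{x}[H_{t}X_{t}^{\post}]$ says nothing about whether the stochastic integral has zero expectation at finite times. (The inverse three-dimensional Bessel process is a nonnegative local martingale whose expectation tends to $0$, yet it is a strict supermartingale.) Concretely, the supermartingale property plus \lemref{Post_vanish} only give $\mathbb{E}^{x}\left[\int_{0}^{\infty}H_{s}c_{s}^{\post}\,ds\right]\leq x$; the deficit could a priori be strictly positive, in which case the candidate plan attains strictly less than the dual bound $\tilde{V}_{\post}(y_0)+y_0x$ (any shortfall is multiplied by $y_0>0$), and the verification collapses.

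What is actually needed --- and what constitutes the core computation of the paper's proof --- is the square-integrability estimate $\int_{0}^{\infty}\mathbb{E}^{x}\left[H_{s}^{2}\left\{(\pi_{s}^{\post})^{2}+(X_{s}^{\post})^{2}\right\}\right]ds<\infty$, which makes $\int_{0}^{t}H_{s}(\sigma\pi_{s}^{\post}-\theta X_{s}^{\post})\,dB_{s}$ an $L^{2}$-bounded true martingale; it follows from \lemref{Post_value_sign}(iii) combined with the fact that $H_{s}/y_{s}=e^{-\beta s}/y_0$ is deterministic. Only after this does one invoke \lemref{Post_vanish} and monotone convergence to pass to $t=\infty$ and conclude the budget binds. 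Your final paragraph gestures at exactly these bounds but aims them at the wrong target: the transversality limits are not the obstruction (they are already proved in \lemref{Post_vanish}); the obstruction is the martingale property of the stochastic integral, and ``uniform integrability of the stopped family'' at deterministic times does not deliver it --- membership in class D requires uniform integrability over all stopping times, which is precisely what fails for the standard counterexamples. The same estimate, run up to $\tau^{*}$, is what is needed in the pre-retirement case; there the paper additionally cites Lemma 6.3 of Karatzas and Wang for the existence of a portfolio attaining equality in \eqref{Budget_constraint_pre}, and your direct construction of $\pi^{\pre}$ from \eqref{pre-retirement} can substitute for that citation only once the associated stochastic integral is likewise shown to be a true martingale.
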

\begin{proof}
The basic idea follows Lemma 6.3 of Karatzas and Wang \cite{KW}. We first consider the post-retirement case. Let $y = ( - \tilde{V}_{\post}' )^{- 1} (x)$. The inequality in \eqref{Lagrange} become an equality when $\mathbb{E}^{x} \left[ \int_{0}^{\infty} H_{s} c_{s}^{\post} d s \right] = x$. Hence, it suffices to check that $\int_{0}^{\infty} \mathbb{E}^{x} \left[ H_{s}^{2} \left\{ (\pi_{s}^{\post})^{2} + (X_{s}^{\post})^{2} \right\} \right] d s < \infty$ ensuring that $\int_{0}^{t} H_{s} (\sigma \pi_{s}^{\post} - \theta X_{s}^{\post}) d B_{s}$ is a martingale (e.g see \eqref{ito_for_HX_post}). This is shown by 
\begin{align*}
  & \int_{0}^{\infty} \mathbb{E}^{x} \left[ H_{s}^{2}\left\{ (\pi_{s}^{\post})^{2} + (X_{s}^{\post})^{2} \right\} \right] d s = \int_{0}^{\infty} \mathbb{E}^{x} \left[ H_{s}^{2} \left\{ \left( \tfrac{\theta y_{s} \tilde{V}_{\post}''(y_{s})}{\sigma} \right)^{2} +(\tilde{V}_{\post}'(y_{s}))^{2} \right\}\right] d s \\
  & \quad \leq \left( \left( \tfrac{\theta}{\sigma} \right)^{2} \sup_{z > 0} |z^{2} \tilde{V}_{\post}''(z)|^2 + \sup_{z > 0} |z \tilde{V}_{\post}'(z)|^2 \right) \int_{0}^{\infty} \mathbb{E}^{x} \left[ \left( \tfrac{H_{s}}{y_{s}} \right)^{2} \right] d s \\
  & \quad = \left( \left( \tfrac{\theta}{\sigma} \right)^{2} \sup_{z > 0} |z^{2} \tilde{V}_{\post}''(z)|^2 + \sup_{z > 0} |z \tilde{V}_{\post}'(z)|^2 \right) y^{- 2} \int_{0}^{\infty} e^{- 2 \beta s} d s < \infty,
\end{align*}
where \lemref{Post_value_sign} ensures finiteness. Combining this with \lemref{Post_vanish}, we conclude that $V_{\post}$ is indeed the value function for post-retirement. For the pre-retirement case, the equality in \eqref{Budget_constraint_pre} can be shown using a similar argument. Lemma 6.3 of Karatzas and Wang \cite{KW} ensures the existence of a portfolio \( \pi \) satisfying \eqref{admissible_condition} and the equality in \eqref{Budget_constraint_pre}.
\end{proof}
The optimal control problem can be summarized as follows:  
\begin{theorem} \label{Summary}
Let $\tau^{*} = \inf \{ s > 0 : y_{s} \leq \overline{y} \}$ where $\overline{y}$ is defined in \eqref{overline_y}. Then the value function $V$, its dual $\tilde{V}$, the optimal consumption rate $c^{*}$, and the optimal portfolio $\pi^{*}$ are given by
\begin{equation}
    \begin{split}
    V(x) & = \inf_{y > 0} \left(\tilde{V}(y) + y x\right), \\
    \tilde{V}(y) & = \tilde{V}_{\pre}(y) + \mathbb{E} \left[ e^{- \beta \tau^{*}} \tilde{V}_{\post} \left( y e^{\beta \tau^{*}} H_{\tau^{*}} \right) \right], \\
    c^{*}_{t} & = c_{t}^{\pre} \cdot \mathbf{1}_{ \{ t < \tau^{*} \} } + c_{t}^{\post} \cdot \mathbf{1}_{ \{ t \geq \tau^{*} \} }, \\\
    \pi^{*}_{t} & = \pi_{t}^{\pre} \cdot \mathbf{1}_{ \{ t < \tau^{*} \} } + \pi_{t}^{\post} \cdot \mathbf{1}_{ \{ t \geq \tau^{*} \} } ,   
    \end{split}
\end{equation}
and $\tau^{*}$ is the optimal retirement time. Furthermore, the agent never retires if and only if $Y \geq \tfrac{1 - \alpha}{\alpha} \cdot (\overline{L} - \underline{L})$.
\end{theorem}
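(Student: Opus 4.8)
The statement synthesizes Sections~\ref{sec:post_value}--\ref{sec:Verification}, so the plan is to assemble those pieces and then close the duality gap. Fix $y_{0}=y$ and set $\tau^{*}=\inf\{s>0:y_{s}\leq\overline{y}\}$ with $\overline{y}$ as in \eqref{overline_y}; the a priori bound $V(x)\leq\inf_{y>0}(\tilde{V}(y)+yx)$ from \eqref{Tilde_V_upper} is the starting point. The first task is to certify $\tau^{*}$ as the optimal stopping rule for the dual. Since by construction $\tilde{V}(y)=\tilde{w}(y)+\tilde{V}_{\post}(y)$ (from $\tilde{w}=\tilde{v}-\tilde{V}_{\post}+\tfrac{Y}{r}y$ and $\tilde{v}=\tilde{V}-\tfrac{Y}{r}y$), and the final proposition of Section~\ref{sec:solution} gives $\tilde{w}$ solving the variational inequality \eqref{ODE_equivalent}, I would apply It\^o's lemma to $e^{-\beta t}\tilde{w}(y_{t})+\int_{0}^{t}e^{-\beta s}(\underline{U}(y_{s})-\overline{U}(y_{s})+Yy_{s})\,ds$: the two inequalities in \eqref{ODE_equivalent} make this a supermartingale for every stopping time and a martingale up to $\tau^{*}$, while the uniform bounds of \lemref{Post_value_sign}(iii) and \lemref{Pre_value_sign}(iii) supply the transversality to pass to $t\to\infty$. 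Hence $\tau^{*}$ attains the supremum in \eqref{ODE_equivalent_stochastic}.

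Next I would derive the displayed decomposition of $\tilde{V}$. Combine the Dynkin identity $\tilde{V}_{\post}(y)=\mathbb{E}^{y}[\int_{0}^{\tau^{*}}e^{-\beta s}\overline{U}(y_{s})\,ds]+\mathbb{E}^{y}[e^{-\beta\tau^{*}}\tilde{V}_{\post}(y_{\tau^{*}})]$, which follows from \eqref{Dual_function_post} and the strong Markov property, with the representation \eqref{ODE_equivalent_stochastic} of $\tilde{w}$. The $\overline{U}$-integrals cancel, leaving $\mathbb{E}^{y}[\int_{0}^{\tau^{*}}e^{-\beta s}(\underline{U}(y_{s})+Yy_{s})\,ds]+\mathbb{E}^{y}[e^{-\beta\tau^{*}}\tilde{V}_{\post}(y_{\tau^{*}})]$. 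The income contribution is then handled by the martingale $\tfrac{H_{t}}{r}+\int_{0}^{t}H_{s}\,ds$ identified after \eqref{ito_for_HX_pre} together with $e^{-\beta s}y_{s}=yH_{s}$: optional sampling expresses $Y\,\mathbb{E}^{y}[\int_{0}^{\tau^{*}}e^{-\beta s}y_{s}\,ds]$ through the capitalized income $\tfrac{Y}{r}y$ and the boundary value of the budget martingale at $\tau^{*}$. Recombining these with $\mathbb{E}^{y}[e^{-\beta\tau^{*}}\tilde{V}_{\post}(y_{\tau^{*}})]$ and matching against the pre-retirement formula \eqref{pre-retirement} produces $\tilde{V}(y)=\tilde{V}_{\pre}(y)+\mathbb{E}[e^{-\beta\tau^{*}}\tilde{V}_{\post}(ye^{\beta\tau^{*}}H_{\tau^{*}})]$, using $y_{\tau^{*}}=ye^{\beta\tau^{*}}H_{\tau^{*}}$.

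For attainment and the controls, the Verification Theorem of Section~\ref{sec:Verification} supplies an admissible $(c,\pi)$ attaining equality in the budget constraint \eqref{Budget_constraint_pre}, so the bound is tight and $V(x)=\inf_{y>0}(\tilde{V}(y)+yx)$; convexity of $\tilde{V}=\tilde{w}+\tilde{V}_{\post}$ makes the infimum attained at $y_{0}=(-\tilde{V}')^{-1}(x)$. Along the resulting state process the first-order conditions $X_{t}=-\tilde{V}_{\pre}'(y_{t})$ on $\{t<\tau^{*}\}$ and $X_{t}=-\tilde{V}_{\post}'(y_{t})$ on $\{t\geq\tau^{*}\}$ yield exactly the concatenated controls $c^{*}=c^{\pre}\mathbf{1}_{\{t<\tau^{*}\}}+c^{\post}\mathbf{1}_{\{t\geq\tau^{*}\}}$ and the analogue for $\pi^{*}$ from \eqref{pre-retirement}--\eqref{post-retirement}. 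Finally, the never-retire equivalence is the corollary following \proref{overline_y<j}: $\overline{y}=0$ exactly when $Y\geq\tfrac{1-\alpha}{\alpha}(\overline{L}-\underline{L})$, and $\overline{y}=0$ forces $\tau^{*}=\inf\{s>0:y_{s}\leq0\}=\infty$ almost surely because $y_{s}>0$ for all $s$.

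I expect the crux to be the decomposition step executed jointly with the attainment claim: the delicate point is the bookkeeping of the capitalized-income terms at the random time $\tau^{*}$ through the budget martingale, and the transversality estimates ensuring that the local martingales arising both in the optimal-stopping verification and in the budget identity are genuine martingales up to and beyond $\tau^{*}$. By contrast, the never-retire dichotomy and the explicit location of $\overline{y}$ in \eqref{overline_y} are already in hand from Section~\ref{sec:solution} and enter only by citation.
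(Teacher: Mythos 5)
Your overall route --- the dual upper bound \eqref{Tilde_V_upper}, certification of $\tau^{*}$ through the variational inequality \eqref{ODE_equivalent}, attainment via the budget-equality argument of \secref{sec:Verification}, and citation of the corollary after \proref{overline_y<j} for the never-retire dichotomy --- is exactly how the paper (implicitly) justifies this theorem: the paper states it with no separate proof, as an assembly of \secref{sec:original}--\secref{sec:Verification}, and your stopping-time verification, attainment, and never-retire steps fill in the standard details correctly.

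The one step you execute in detail, however, does not close as you claim, and it is precisely the step you call the crux. Carry your own bookkeeping to the end: the Dynkin identity gives $\tilde{V}_{\post}(y)=\mathbb{E}^{y}\left[\int_{0}^{\tau^{*}}e^{-\beta s}\overline{U}(y_{s})\,ds\right]+\mathbb{E}^{y}\left[e^{-\beta\tau^{*}}\tilde{V}_{\post}(y_{\tau^{*}})\right]$, and the optional-sampling identity for the martingale $\tfrac{H_{t}}{r}+\int_{0}^{t}H_{s}\,ds$ gives $Y\,\mathbb{E}^{y}\left[\int_{0}^{\tau^{*}}e^{-\beta s}y_{s}\,ds\right]=\tfrac{Y}{r}y-\tfrac{Y}{r}\mathbb{E}^{y}\left[e^{-\beta\tau^{*}}y_{\tau^{*}}\right]$. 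Substituting these into $\tilde{V}=\tilde{w}+\tilde{V}_{\post}$ yields
\[
\tilde{V}(y)=\tilde{V}_{\pre}(y)+\mathbb{E}^{y}\left[e^{-\beta\tau^{*}}\left(\tilde{V}_{\post}(y_{\tau^{*}})-\tfrac{Y}{r}\,y_{\tau^{*}}\right)\right],
\]
not the displayed identity: since $\tilde{V}_{\pre}$ in \eqref{pre-retirement} carries the full perpetuity $\tfrac{Y}{r}y$, adding $\mathbb{E}^{y}[e^{-\beta\tau^{*}}\tilde{V}_{\post}(y_{\tau^{*}})]$ on top double-counts income after retirement, and the leftover term $\tfrac{Y}{r}\mathbb{E}^{y}[e^{-\beta\tau^{*}}y_{\tau^{*}}]=\tfrac{Yy}{r}\mathbb{E}[H_{\tau^{*}}]$ is strictly positive whenever $Y>0$ and $\overline{y}>0$. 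This discrepancy originates in the paper's own statement (the clean identity is $\tilde{V}=\tilde{w}+\tilde{V}_{\post}$, with continuation payoff $\tilde{V}_{\post}(y_{\tau})-\tfrac{Y}{r}y_{\tau}$ exactly as it appears in \eqref{Tilde_V_upper}), so your method is sound; but your proof cannot assert that "matching against \eqref{pre-retirement} produces" the displayed formula. An honest execution of your own computation produces the corrected formula above, and you should either state that formula or explicitly flag the double-counting, rather than claim a match that the algebra does not deliver.
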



\section{Properties of the optimal plan} \label{sec:properties}

We now provide a comparison between the benchmark and the optimal plan, analyzing how income $Y$ and the two leisure levels $( \underline{L} ,\,\overline{L})$ affect the optimal portfolio, consumption, and retirement decision.
\begin{proposition}[Effect of Income and Leisure on Optimal Behavior] \label{optimal_properties} 
Let $x$ be fixed. Then {\ \\}
(i) $\overline{y}$ is strictly decreasing in $Y, \underline{L}$, and is strictly increasing in $\overline{L}$. \\
(ii) $c_{*}^{\post}(x)$ and $\pi_{*}^{\post}(x)$ are independent of $(Y, \underline{L}, \overline{L})$. \\
(iii) If $(- \tilde{V}_{\pre}')^{- 1}(x) \geq A(\underline{L})$, then $c_{*}^{\pre}(x) = 0$ and $\pi_{*}^{\pre}(x) = \frac{\theta (1 - m_{-})}{\sigma} \left( x + \frac{Y}{r} \right)$. If $(- \tilde{V}_{\pre}')^{- 1}(x) < A(\underline{L})$, then $c_{*}^{\pre}(x)$ is strictly increasing in $Y$, $\underline{L}$, and is non-increasing in $\overline{L}$. $\pi_{*}^{\pre}(x)$ is strictly decreasing in $\underline{L}$, and is non-decreasing in $\overline{L}$.
\end{proposition}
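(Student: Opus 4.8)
The plan is to handle (i)--(iii) separately, in each case reducing to a monotonicity that can be read off an explicit formula. For (i) I would differentiate the free-boundary equation $R(\overline{y};Y)=0$ of \eqref{Equation_overline_y} implicitly. Since $\overline{y}\mapsto R(\overline{y};Y)$ is strictly increasing across its root by \proref{overline_y<j}, the sign of $\partial\overline{y}/\partial p$ is opposite to that of $\partial R/\partial p$ at fixed $\overline{y}$, for each parameter $p$. The integrand of $R$ is $z^{-1-m_{+}}\{\underline{U}(z)-\overline{U}(z)+Yz\}$, so it suffices to sign the leisure dependence of $z\mapsto\underline{U}(z)-\overline{U}(z)$. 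As $A(l)=e^{-\gamma^{*}(1-\alpha)l}$ is strictly decreasing in $l$ and the envelope theorem gives $\partial_{A(\underline{L})}\underline{U}<0$ and $\partial_{A(\overline{L})}\overline{U}<0$, the integrand is pointwise strictly increasing in $\underline{L}$, strictly decreasing in $\overline{L}$, and strictly increasing in $Y$ (the last already noted in the text). Hence $\overline{y}$ is strictly decreasing in $Y,\underline{L}$ and strictly increasing in $\overline{L}$ throughout the retirement regime $\overline{y}>0$, the degenerate value $\overline{y}=0$ being governed by the threshold $Y\ge\tfrac{1-\alpha}{\alpha}(\overline{L}-\underline{L})$.

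For (ii) I would use the CARA scaling. Setting $u:=y/A(\overline{L})$, the expressions in \eqref{Post_value} show that $-\tilde{V}_{\post}'(y)$, the post-retirement consumption $\tfrac{1}{\gamma}(-\ln u)^{+}$, and $y\tilde{V}_{\post}''(y)$ are each functions of $u$ alone together with constants independent of $(Y,\underline{L},\overline{L})$; in particular the map $x=-\tilde{V}_{\post}'(y)\mapsto u$ carries no $A(\overline{L})$-dependence. Therefore $c_{*}^{\post}(x)$ and $\pi_{*}^{\post}(x)=\tfrac{\theta}{\sigma}y\tilde{V}_{\post}''(y)$ are functions of $x$ only, and $Y,\underline{L}$ do not appear in $\tilde{V}_{\post}$ at all.

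For (iii) the subcase $y(x):=(-\tilde{V}_{\pre}')^{-1}(x)\ge A(\underline{L})$ is a one-line computation: in the branch $y>A(\underline{L})$ of \eqref{Pre_value} one has $c_{*}^{\pre}(x)=0$, $x+\tfrac{Y}{r}=-C_{3}m_{-}y^{m_{-}-1}$ and $y\tilde{V}_{\pre}''(y)=(m_{-}-1)C_{3}m_{-}y^{m_{-}-1}$, whence $\pi_{*}^{\pre}(x)=\tfrac{\theta(1-m_{-})}{\sigma}\big(x+\tfrac{Y}{r}\big)$. In the subcase $y(x)<A(\underline{L})$ I would change variables to $c:=\tfrac{1}{\gamma}\ln(A(\underline{L})/y)>0$ and rewrite \eqref{Pre_value} as $x+\tfrac{Y}{r}=\Phi(c)+K_{2}e^{\gamma(1-m_{-})c}$ and $\pi_{*}^{\pre}(x)=\tfrac{\theta}{\sigma}\big[\Psi(c)+(1-m_{-})K_{2}e^{\gamma(1-m_{-})c}\big]$, where $\Phi,\Psi$ depend only on the market constants and all dependence on $(Y,\underline{L},\overline{L})$ is carried by $K_{2}:=-C_{2}m_{-}A(\underline{L})^{m_{-}-1}>0$. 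Since $\Phi'>0$, the first relation makes $c$ strictly increasing in the effective wealth $x+Y/r$ and strictly decreasing in $K_{2}$; differentiating $\pi_{*}^{\pre}$ and using the identity $(1-m_{-})\Phi'-\Psi'=\tfrac{1-m_{-}}{r}\big(1-e^{-\gamma(m_{+}-1)c}\big)\ge 0$ yields $\partial_{K_{2}}\pi_{*}^{\pre}\ge 0$. Thus $c_{*}^{\pre}$ and $\pi_{*}^{\pre}$ respond oppositely to $K_{2}$, which is exactly the opposite monotonicity asserted in $\underline{L},\overline{L}$.

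It then remains to track $K_{2}$ through the parameters. A direct substitution collapses $K_{2}$ to a function of the single variable $\overline{c}:=\tfrac{1}{\gamma}\ln(A(\underline{L})/\overline{y})$, and one checks that $\overline{L}$ influences the pre-retirement controls only through $\overline{y}$ (hence $\overline{c}$), $Y$ additionally through $x+Y/r$, and $\underline{L}$ through both $A(\underline{L})$ and $\overline{y}$. Given (i), the $\overline{L}$- and $Y$-monotonicities reduce to the single sign $K_{2}'(\overline{c})\le 0$. The hard part will be this sign analysis: establishing $K_{2}'(\overline{c})\le 0$ from the closed form (which requires rewriting the constant $\beta-2r+\tfrac{\theta^{2}}{2}$ via \eqref{Relation_beta_r}), and---for the $\underline{L}$-direction, where the $A(\underline{L})\downarrow$ and $\overline{y}\downarrow$ channels push $\overline{c}$ in opposite directions---proving that $\overline{c}$ is nonetheless strictly increasing in $\underline{L}$. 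I expect the latter to need the quantitative dependence of $\overline{y}$ on $\underline{L}$ drawn from the regime-wise descriptions \eqref{overline_y}--\eqref{oy_small_equation}, making this combined sign bookkeeping the main obstacle of the proof.
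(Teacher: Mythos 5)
Your parts (i), (ii) and the first half of (iii) coincide with the paper's own proof: the paper signs the integrand of \eqref{Equation_overline_y} in each parameter exactly as you do, and it proves (ii) by the same scaling observation (its function $F$ of $X = A(\overline{L})/(-\tilde{V}_{\post}')^{-1}(x)$ is your variable $u^{-1}$). The genuine divergence is in the second half of (iii). The paper differentiates $x+\tilde{V}_{\pre}'\bigl((-\tilde{V}_{\pre}')^{-1}(x)\bigr)=0$ directly in \eqref{Pre_value} and transfers signs through the auxiliary identity \eqref{Equation_1}; all parameter dependence there is carried by the coefficient $C_{2}$, and the pivotal input $\partial C_{2}/\partial\overline{y}=\tilde{C}'(\overline{y})\ge 0$ is obtained \emph{probabilistically} --- from the stochastic representation \eqref{Tilde_W} and the monotonicity of the hitting time in the barrier --- inside the proof of Proposition~\ref{Compare}, i.e.\ by a forward reference. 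Your $K_{2}=-C_{2}m_{-}A(\underline{L})^{m_{-}-1}$ is the same object, your identity $(1-m_{-})\Phi'-\Psi'=\tfrac{1-m_{-}}{r}\bigl(1-e^{-\gamma(m_{+}-1)c}\bigr)$ is the paper's sign relation \eqref{portolio_pre_another}--\eqref{Pre_c_pi_2}, and your target $K_{2}'(\overline{c})\le 0$ is exactly $\tilde{C}'\ge 0$. What your route buys is self-containedness and a cleaner bookkeeping: the calculus verification you defer does go through. Writing $u=\gamma\overline{c}$ and using \eqref{Relation_beta_r} in the form $\tfrac{\beta-2r+\theta^{2}/2}{r}=-1+\tfrac{1}{m_{+}-1}+\tfrac{1}{m_{-}-1}$, one gets $K_{2}'(u)=\tfrac{-m_{-}}{\gamma r}e^{-(1-m_{-})u}g(u)$ with $g(u)=1-(1-m_{-})\bigl(u-\tfrac{\beta-2r+\theta^{2}/2}{r}\bigr)-\tfrac{1-m_{-}}{m_{+}(m_{+}-1)}e^{-(m_{+}-1)u}$, and then $g(0)=-\tfrac{(1-m_{-})(m_{+}-1)}{m_{+}}<0$, $g'(u)=-(1-m_{-})\bigl(1-\tfrac{1}{m_{+}}e^{-(m_{+}-1)u}\bigr)<0$, so in fact $K_{2}'<0$ strictly. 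Your reduction also makes the paper's step \eqref{Equation_1} unnecessary, since with $K_{2}$ as the sole carrier of the $\underline{L}$-dependence the sign transfer from $\overline{y}$ to the controls is automatic; on the other hand, for the $\underline{L}$-direction both you and the paper must fall back on the regime-wise free-boundary equations \eqref{oy_small_equation} to show that $\overline{c}$ is strictly increasing in $\underline{L}$, and there your plan is the same as the paper's.

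Two caveats. First, your stated sign $K_{2}'(\overline{c})\le 0$ is too weak for the strict assertions in $\underline{L}$: since $\underline{L}$ acts on the pre-retirement controls only through $K_{2}(\overline{c})$, the claims that $c_{*}^{\pre}$ is strictly increasing and $\pi_{*}^{\pre}$ strictly decreasing in $\underline{L}$ require $K_{2}'<0$ at the relevant point (the $Y$-claim is fine with ``$\le$'' because its strictness comes from the $x+Y/r$ channel, and the $\overline{L}$-claims are non-strict). As shown above the strict inequality does hold, so this is an imprecision, not an error. Second, your proposal defers exactly the two steps where the mathematical content lies ($K_{2}'\le 0$ and the $\underline{L}$-monotonicity of $\overline{c}$); both are feasible by the methods you name, so the plan is sound, but it remains a plan rather than a proof until that sign bookkeeping is carried out.
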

\begin{proof} {\ \\}
(i) The integrand of \eqref{Equation_overline_y} is strictly increasing in $Y$, $\underline{L}$ and strictly decreasing in $\overline{L}$. Hence, we get the desired results. \medskip

\noindent (ii) Let $I_{(\textnonit)} =\left (- \tilde{V}_{(\textnonit)}'\right)^{- 1}$ where $(\textnonit) \in \{ \Mer, \post, \pre \}$, and define
\begin{align*}
  F(X) := \begin{cases}
   \frac{m_{-} - 1}{\gamma r (m_{+} - 1) (m_{+} - m_{-})} X^{1 - m_{+}} + \frac{\beta - r + \frac{\theta^{2}}{2}}{\gamma r^{2}} - \frac{\ln \left( X \right)}{\gamma r} \quad \text{for } X \geq 1, \\
   \frac{m_{+} - 1}{\gamma r (m_{-} - 1) (m_{+} - m_{-})} X^{1 - m_{-}} \qquad \qquad \qquad \qquad \quad \text{for } X < 1.
   \end{cases}
\end{align*}
From \eqref{Post_value}, we have $x + F \left( \frac{A(\overline{L})}{I_{\post}(x)} \right) = 0$. Since $(m_{+}, m_{-}, \gamma, r)$ are independent of $(Y, \underline{L},\overline{L})$ and 
\begin{align} \label{Derivative_sign_1}
  F'(X) = \begin{cases}
   - \frac{1}{X \gamma r} \left( \frac{m_{-} - 1}{m_{+} - m_{-}} X^{1 - m_{+}} + 1 \right)\quad \text{for } X \geq 1, \\
   - \frac{m_{+} - 1}{\gamma r (m_{+} - m_{-})} X^{- m_{-}} \qquad \qquad \quad \text{for } X < 1,
   \end{cases} < 0,
\end{align}
differentiating $x + F \left( \frac{A(\overline{L})}{I_{\post}(x)} \right) = 0$ with respect to $P \in \{ Y, \underline{L}, \overline{L} \}$ implies $F' \left( \frac{A(\overline{L})}{I_{\post}(x)} \right) \frac{\partial}{\partial P} \left( \frac{A(\overline{L})}{I_{\post}(x)} \right) = 0$. Thus, $\frac{\partial}{\partial P} \left( \frac{A(\overline{L})}{I_{\post}(x)} \right) = 0$. Also since $\frac{\partial}{\partial P} \left( \frac{A(\overline{L})}{I_{\post}(x)} \right) = 0$ and 
\begin{align}
  I_{\post}(x) \tilde{V}_{\post}''(I_{\post}(x)) = \begin{cases}
   \frac{m_{-} - 1}{\gamma r (m_{+} - m_{-})} \left( \tfrac{A(\overline{L})}{I_{\post}(x)} \right)^{1 - m_{+}} + \frac{1}{\gamma r} \qquad \quad \text{if } \tfrac{A(\overline{L})}{I_{\post}(x)} \geq 1, \\
   \frac{m_{+} - 1}{\gamma r (m_{+} - m_{-})} \left( \tfrac{A(\overline{L})}{I_{\post}(x)} \right)^{1 - m_{-}} \qquad \qquad \quad \text{if } \tfrac{A(\overline{L})}{I_{\post}(x)} < 1, 
   \end{cases}
\end{align}
we conclude that  $\frac{\partial \pi_{*}^{\post}}{\partial P} = 0$. \medskip

\noindent (iii) The first statement follows directly from $x + \tilde{V}_{\pre}'(I_{\pre}(x)) = 0$. Now, assume $(- \tilde{V}_{\pre}')^{- 1}(x) < A(\underline{L})$. We aim to show that $c_{*}^{\pre}$ is strictly increasing in $Y$. Differentiating $x + \tilde{V}_{\pre}'(I_{\pre}(x)) = 0$, and using \eqref{Pre_value}, with respect to $Y$ and $\overline{L}$ respectively gives 
\begin{align*}
  \tfrac{\partial I_{\pre}(x)}{\partial Y} = - \tfrac{\frac{1}{r} + \frac{\partial C_{2}}{\partial Y} m_{-} [I_{\pre}(x)]^{m_{-} - 1}}{\tilde{V}_{\pre}''(I_{\pre}(x))} < 0, \quad
  \tfrac{\partial I_{\pre}(x)}{\partial \overline{L}} = - \tfrac{\frac{\partial C_{2}}{\partial \overline{L}} m_{-} [I_{\pre}(x)]^{m_{-} - 1}}{\tilde{V}_{\pre}''(I_{\pre}(x))} \geq 0,
\end{align*}
where $\frac{\partial C_{2}}{\partial Y} = \frac{\partial C_{2}}{\partial \overline{y}} \cdot \frac{\partial \overline{y}}{\partial Y} \leq 0$, $\frac{\partial C_{2}}{\partial \overline{L}} = \frac{\partial C_{2}}{\partial \overline{y}} \cdot \frac{\partial \overline{y}}{\partial \overline{L}} \geq 0$ (due to (i) and $\tilde{C}'(\overline{y}) \geq 0$), and $\tilde{V}_{\pre}'' > 0$ are used in the first and second sign, respectively. This implies $\frac{\partial c_{*}^{\pre}(x)}{\partial Y} > 0$, $\frac{\partial c_{*}^{\pre}(x)}{\partial \overline{L}} \leq 0$.

Combining \eqref{Pre_value} with $x + \tilde{V}_{\pre}'(I_{\pre}(x)) = 0$, we have 
\begin{align} \label{portolio_pre_another}
  I_{\pre}(x) \tilde{V}_{\pre}''(I_{\pre}(x)) = \tfrac{m_{-} - 1}{\gamma r} \left( \tfrac{1}{m_{+} - 1} \left( \tfrac{A(\underline{L})}{I_{\pre}(x)} \right)^{1 - m_{+}} + \ln \left( \tfrac{A(\underline{L})}{I_{\pre}(x)} \right) \right) + (1 - m_{-}) \left( x + \tfrac{Y}{r} + \tfrac{\beta - r + \frac{\theta^{2}}{2}}{\gamma r^{2}} \right) + \tfrac{1}{\gamma r}.
\end{align}
Differentiating \eqref{portolio_pre_another} with respect to $\overline{L}$, we get $sgn \left( \tfrac{\partial \pi_{*}^{\pre}(x)}{\partial \overline{L}} \right) = - sgn \left( \tfrac{\partial c_{*}^{\pre}(x)}{\partial \overline{L}} \right)$ due to $I_{pre}(x) < A(\underline{L})$ and $m_{+} > 1$. Combining this with $\frac{\partial c_{*}^{\pre}(x)}{\partial \overline{L}} \leq 0$, we conclude $\frac{\partial \pi_{*}^{\pre}(x)}{\partial \overline{L}} \geq 0$.

To find sign of $\frac{\partial c_{*}^{\pre}(x)}{\partial \underline{L}}$ and $\frac{\partial \pi_{*}^{\pre}(x)}{\partial \underline{L}}$, we evaluate (by the same way as \eqref{portolio_pre_another}) 
\begin{align}  \label{Pre_c_pi_2}
  sgn \left( \tfrac{\partial \pi_{*}^{\pre}(x)}{\partial \underline{L}} \right) = - sgn \left( \tfrac{\partial c_{*}^{\pre}(x)}{\partial \underline{L}} \right).
\end{align}
From \eqref{Pre_value}, together with $\tilde{V}_{\pre}(\overline{y}) - \frac{Y \overline{y}}{r} = 0 = x + \tilde{V}_{\pre}'(I_{\pre}(x))$, we derive 
\begin{equation} \label{Equation_1}
  \begin{split}
  - C_{2} m_{-} A(\underline{L})^{m_{-} - 1} & = m_{-} \left( \tfrac{A(\underline{L})}{\overline{y}} \right)^{m_{-} - 1} \left( \tfrac{m_{-} - 1}{\gamma r m_{+} (m_{+} - 1) (m_{+} - m_{-})} \left( \tfrac{A(\underline{L})}{\overline{y}} \right)^{1 - m_{+}} + \tfrac{\beta - 2 r + \frac{\theta^{2}}{2}}{\gamma r^{2}} - \tfrac{\ln \left( \frac{A(\underline{L})}{\overline{y}} \right)}{\gamma r} \right) \\
  & = \left( \tfrac{A(\underline{L})}{I_{\pre}(x)} \right)^{m_{-} - 1} \left( x + \tfrac{Y}{r} + \tfrac{m_{-} - 1}{\gamma r (m_{+} - 1) (m_{+} - m_{-})} \left( \tfrac{A(\underline{L})}{I_{\pre}(x)} \right)^{1 - m_{+}} + \tfrac{\beta - r + \frac{\theta^{2}}{2}}{\gamma r^{2}} - \tfrac{\ln \left( \frac{A(\underline{L})}{I_{\pre}(x)} \right)}{\gamma r} \right).
  \end{split}
\end{equation}
For any $X \geq 1$, combining \eqref{Equation_1} with \eqref{Relation_beta_r} gives 
\begin{equation}
  \begin{split}
  & \tfrac{\partial}{\partial X} \left( m_{-} X^{m_{-} - 1} \left( \tfrac{m_{-} - 1}{\gamma r m_{+} (m_{+} - 1) (m_{+} - m_{-})} X^{1 - m_{+}} + \tfrac{\beta - 2 r + \frac{\theta^{2}}{2}}{\gamma r^{2}} - \tfrac{\ln \left( X \right)}{\gamma r} \right) \right) \nonumber \\
  & \quad = \tfrac{m_{-} X^{m_{-} - 2}}{\gamma r} \left( (1 - m_{-}) \left( \tfrac{X^{1 - m_{+}}}{m_{+} (m_{+} - 1)} + \ln (X) + \tfrac{m_{+} m_{-} - 2 m_{+} - 2 m_{-} + 3}{(m_{+} - 1) (m_{-} - 1)} \right) - 1 \right) < 0, \\
  & \tfrac{\partial}{\partial X} \left( X^{m_{-} - 1} \left( x + \tfrac{Y}{r} + \tfrac{m_{-} - 1}{\gamma r (m_{+} - 1) (m_{+} - m_{-})} X^{1 - m_{+}} + \tfrac{\beta - r + \frac{\theta^{2}}{2}}{\gamma r^{2}} - \tfrac{\ln \left( X \right)}{\gamma r} \right) \right) \nonumber \\
  & \quad = (m_{-} - 1) X^{m_{-} - 2} \left( - C_{2} m_{-} A(\underline{L})^{m_{-} - 1} \left( \tfrac{A(\underline{L})}{I_{\pre}(x)} \right)^{1 - m_{-}} \right) + \tfrac{X^{m_{-} - 2}}{\gamma r} \left( \tfrac{1 - m_{-}}{m_{+} - m_{-}} X^{1 - m_{+}} - 1 \right) < 0.
  \end{split}
\end{equation}
Hence, differentiating \eqref{Equation_1} with respect to $\underline{L}$ implies $sgn \left( \frac{\partial}{\partial \underline{L}} \left( \tfrac{A(\underline{L})}{I_{\pre}(x)} \right) \right) = sgn \left( \frac{\partial}{\partial \underline{L}} \left( \tfrac{A(\underline{L})}{\overline{y}} \right) \right)$.

If $\overline{y} \in [A(\overline{L}), A(\underline{L}))$, then differentiating \eqref{oy_small_equation} with respect to $\underline{L}$ implies 
\begin{align*}
  0 & = \tfrac{\left( \frac{A(\underline{L})}{\overline{y}} \right)^{- 1}}{\gamma m_{+} (m_{+} - 1)} \left( \tfrac{A(\overline{L})}{A(\underline{L})} \left( \tfrac{A(\underline{L})}{\overline{y}} \right)^{m_{+}} (m_{+} - 1) + 1 - m_{+} \left( \tfrac{A(\underline{L})}{\overline{y}} \right)^{m_{+} - 1} \right) \tfrac{\partial}{\partial \underline{L}} \left( \tfrac{A(\underline{L})}{\overline{y}} \right) + \tfrac{\left( \frac{A(\underline{L})}{\overline{y}} \right)^{m_{+}} \frac{A(\overline{L})}{A(\underline{L})} (1 - \alpha)}{m_{+} \alpha},
\end{align*}
where the equality is derived by eliminating $Y$. Since 
$$Z \mapsto \tfrac{A(\overline{L})}{A(\underline{L})} \left( \tfrac{A(\underline{L})}{Z} \right)^{m_{+}} (m_{+} - 1) + 1 - m_{+} \left( \tfrac{A(\underline{L})}{Z} \right)^{m_{+} - 1} =: F(Z)$$ is strictly decreasing in $Z \in [A(\overline{L}), A(\underline{L}))$ with $\lim_{Z \uparrow A(\underline{L})} F(Z) < 0$, we get $\frac{\partial c_{*}^{\pre}(x)}{\partial \underline{L}} > 0$. Together with \eqref{Pre_c_pi_2}, we conclude $\frac{\partial \pi_{*}^{\pre}(x)}{\partial \underline{L}} < 0$.

If $0 < \overline{y} < A(\overline{L})$, on the other hand, then differentiating \eqref{oy_small_equation} with respect to $\underline{L}$ implies 
\begin{align*}
  0 = \left( \tfrac{A(\underline{L})}{\overline{y}} \right)^{m_{+} - 2} \left( Y - \tfrac{1 - \alpha}{\alpha} (\overline{L} - \underline{L}) \right) \tfrac{\partial}{\partial \underline{L}} \left( \tfrac{A(\underline{L})}{\overline{y}} \right) + \tfrac{1 - \alpha}{\alpha (m_{+} - 1)} \left( \left( \tfrac{A(\underline{L})}{\overline{y}} \right)^{m_{+} - 1} - \left( \tfrac{A(\underline{L})}{A(\overline{L})} \right)^{m_{+} - 1} \right).
\end{align*}
Due to $m_{+} > 1$, $A(\overline{L}) < A(\underline{L})$, and $Y < \tfrac{1 - \alpha}{\alpha} (\overline{L} - \underline{L})$, the above equality implies $\frac{\partial c_{*}^{\pre}(x)}{\partial \underline{L}} > 0$. Together with \eqref{Pre_c_pi_2}, we conclude $\frac{\partial \pi_{*}^{\pre}(x)}{\partial \underline{L}} < 0$.
\end{proof}

\begin{remark}[Economic interpretations]
We provide economic interpretations of \proref{optimal_properties} as follows: \\
The timing of retirement is influenced by the interplay between leisure and income. When income or leisure before retirement increases, the agent finds working more attractive and tends to delay retirement, opting to enjoy the benefits of continued income and utility from leisure while still employed. Conversely, if leisure after retirement becomes more desirable, the agent has a stronger incentive to retire earlier to maximize time spent in this enhanced state of leisure. This balance between pre- and post-retirement utilities drives the optimal retirement decision.

Before retirement, the agent's optimal consumption and portfolio choices are independent of leisure and income levels. This independence indicates that these financial decisions are shaped more by preferences for risk and consumption timing than by external factors like the agent's income or leisure availability. The agent's focus is on achieving an optimal financial trajectory, which remains unaffected by variations in income or leisure.

Income and leisure exert nuanced effects on pre-retirement behavior. An increase in income leads to higher consumption, as the agent can afford to allocate more resources to present utility. Greater leisure before retirement motivates the agent to reduce stock investments, prioritizing time spent on leisure activities, which indirectly increases consumption. However, when leisure after retirement becomes more significant, the agent adjusts by investing more in stocks to secure higher returns for the future, which leads to a reduction in current consumption. These adjustments reflect the agents strategy to balance present and future utility optimally.
\end{remark}

\begin{figure}
		\begin{center}
			\includegraphics[width=0.45\textwidth]{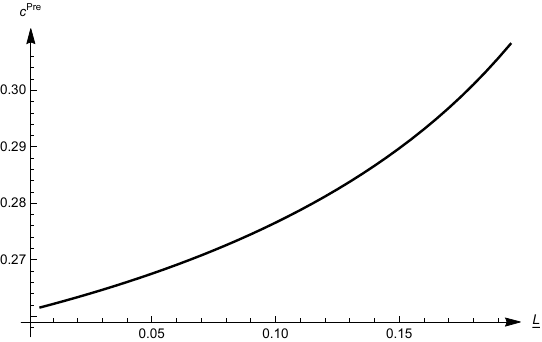} \quad
			\includegraphics[width=0.45\textwidth]{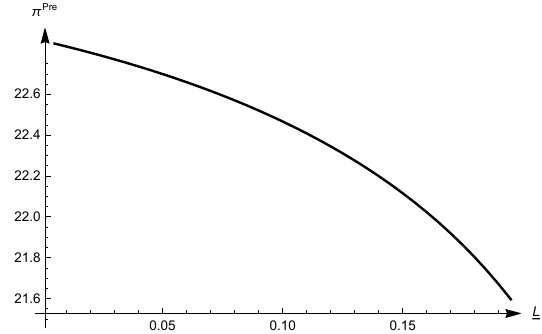}
	\end{center}
	 \caption{The two graphs describe optimal consumption and portfolio as function of $\underline{L}$. The parameters are $\beta = 0.03$, $r = 0.01$, $\mu = 0.07$, $\sigma = 0.2$, $\gamma = 3$, $\overline{L} = 0.5$, $Y = 0.1$, $\alpha = 0.4$, $x = 1$.
		}
		\label{various_uL}
\end{figure}

\begin{figure}
		\begin{center}
			\includegraphics[width=0.45\textwidth]{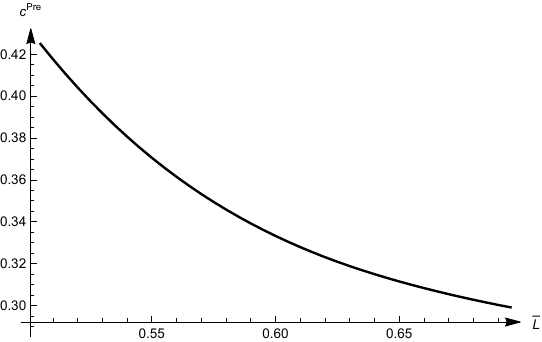} \quad
			\includegraphics[width=0.45\textwidth]{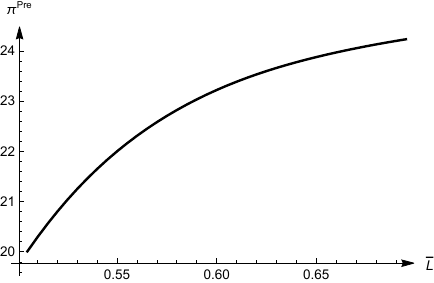}
	\end{center}
	 \caption{The two graphs describe optimal consumption and portfolio of pre-retirement as function of $\overline{L}$. The parameters are $\beta = 0.03$, $r = 0.01$, $\mu = 0.07$, $\sigma = 0.2$, $\gamma = 3$, $\underline{L} = 0.3$, $Y = 0.1$, $\alpha = 0.4$, $x = 1$.
		}
		\label{various_oL}
\end{figure}

\begin{remark}
Equation \eqref{portolio_pre_another} shows that
\begin{align*}
  \tfrac{\partial}{\partial Y} \left( I_{\pre}(x) \tilde{V}_{\pre}''(I_{\pre}(x)) \right) = \tfrac{1 - m_{-}}{\gamma r} \left( \tfrac{A(\underline{L})}{I_{\pre}(x)} \right)^{- 1} \left( \left( \tfrac{A(\underline{L})}{I_{\pre}(x)} \right)^{1 - m_{+}} - 1 \right) \tfrac{\partial}{\partial Y} \left( \tfrac{A(\underline{L})}{I_{\pre}(x)} \right) + \tfrac{1 - m_{-}}{r},
\end{align*}
while $\frac{\partial c_{*}^{\pre}(x)}{\partial Y} > 0$ holds for $I_{\pre}(x) < A(\underline{L})$, the term $\frac{1 - m_{-}}{r}$ can potentially disrupt monotonicity. This nuanced behavior is illustrated in Figure \ref{various_Y}.
\end{remark}

\begin{figure}
		\begin{center}
			\includegraphics[width=0.45\textwidth]{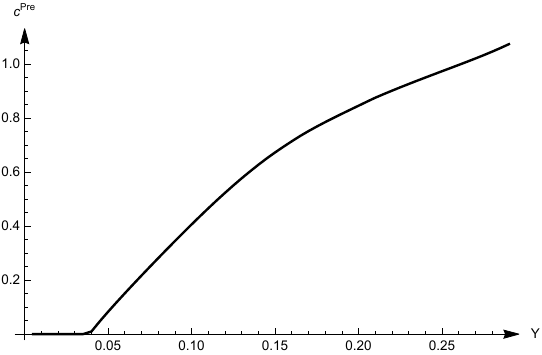} \quad 
			\includegraphics[width=0.45\textwidth]{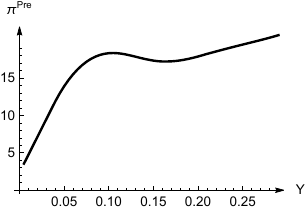} \\
			\includegraphics[width=0.45\textwidth]{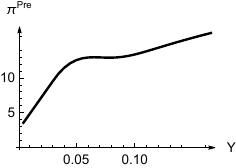} \quad 
			\includegraphics[width=0.45\textwidth]{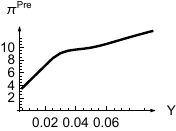}
	\end{center}
	 \caption{The first graph shows $c_{*}^{\pre}$ and the remaining graphs show $\pi_{*}^{\pre}$ as a function of $Y$. The parameters are $\beta = 0.03$, $r = 0.01$, $\mu = 0.07$, $\sigma = 0.2$, $\gamma = 3$, $\underline{L} = 0.4$, $\overline{L} = 0.5$, $x = 1$ with $\alpha = 0.25$ (in $c_{*}^{\pre}$), $\alpha = 0.25$, $\alpha = 0.37$, $\alpha = 0.5$ (in $\pi_{*}^{\pre}$) in order.
		}
		\label{various_Y}
\end{figure}

\begin{proposition}[Comparison at same wealth level] \label{Compare} Let us define 
\begin{align*}
  \begin{cases}
  c_{*}^{\Mer}(x) := \tfrac{1}{\gamma} \left( \ln \left( \tfrac{A(\underline{L})}{I_{\Mer}(x)} \right) \right)^{+}, \\\
  c_{*}^{\pre}(x) := \tfrac{1}{\gamma} \left( \ln \left( \tfrac{A(\underline{L})}{I_{\pre}(x)} \right) \right)^{+}, \\
  c_{*}^{\post}(x) := \tfrac{1}{\gamma} \left( \ln \left( \tfrac{A(\overline{L})}{I_{\post}(x)} \right) \right)^{+},
  \end{cases}
  \quad 
  \begin{cases}  
  \pi_{*}^{\Mer}(x) := \tfrac{\theta I_{\Mer}(x) \tilde{V}_{\Mer}''( I_{\Mer}(x) )}{\sigma}, \\
  \pi_{*}^{\pre}(x) := \tfrac{\theta I_{\pre}(x) \tilde{V}_{\pre}''( I_{\pre}(x) )}{\sigma}, \\
  \pi_{*}^{\post}(x) := \tfrac{\theta I_{\post}(x) \tilde{V}_{\post}''( I_{\post}(x) )}{\sigma},
  \end{cases}
\end{align*}
where the optimal consumption and portfolio are observed at each wealth level $x$. Specifically, $c_{*}^{\Mer}(x)$ and $\pi_{*}^{\Mer}(x)$ are defined in $( - \frac{Y}{r}, \infty)$, $c_{*}^{\post}(x)$ and $\pi_{*}^{\post}(x)$ are defined in $( 0, \infty )$, and $c_{*}^{\pre}(x)$ are $\pi_{*}^{\pre}(x)$ are defined in $( - \frac{Y}{r}, - \tilde{V}_{\pre}'(\overline{y})]$. Then for each $x$, \\
\noindent (i) 
\begin{align} \label{c_compare}
  c_{*}^{\post}(x) \leq c_{*}^{\Mer}(x) \quad \text{and} \quad c_{*}^{\pre}(x) \leq c_{*}^{\Mer}(x).
\end{align}
\noindent (ii) 
\begin{align}
  \begin{cases}
  x \leq - \tilde{V}_{\Mer}'(A(\underline{L})) \quad & \Rightarrow \quad \pi_{*}^{\post}(x) < \pi_{*}^{\Mer}(x) = \pi_{*}^{\pre}(x) = (1 - m_{-}) (x + \frac{Y}{r}) \\
  x > - \tilde{V}_{\Mer}'(A(\underline{L})) \quad & \Rightarrow \quad \pi_{*}^{\post}(x) < \pi_{*}^{\Mer}(x) < \pi_{*}^{\pre}(x).
  \end{cases}\label{Pi_compare}
\end{align}
\end{proposition}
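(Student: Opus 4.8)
The plan is to reduce the three dual value functions to a common ``base'' Merton dual, settle the consumption statement by monotonicity, and then reduce the portfolio statement to the monotonicity of one scalar function.

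First I would record the scaling identity
\[
\tilde{V}_{\post}(y) = A(\overline{L})\,G\!\left(\tfrac{y}{A(\overline{L})}\right),
\qquad
\tilde{V}_{\Mer}(y) = \tfrac{Y}{r}\,y + A(\underline{L})\,G\!\left(\tfrac{y}{A(\underline{L})}\right),
\]
where $G$ is the dual value of the pure Merton problem with unit leisure weight ($A\equiv 1$) and no income; this follows from \eqref{Post_value} and \eqref{Mer_value} together with the invariance of \eqref{ODE_post} under $y\mapsto y/A$, $\overline{U}\mapsto A\,\overline{U}$. Writing $I_G:=(-G')^{-1}$, which is strictly decreasing, the first-order conditions give $I_{\post}(x)=A(\overline{L})I_G(x)$ and $I_{\Mer}(x)=A(\underline{L})I_G(x+\tfrac{Y}{r})$, so the consumption formulas collapse to $c_{*}^{\post}(x)=\tfrac1\gamma(-\ln I_G(x))^{+}$ and $c_{*}^{\Mer}(x)=\tfrac1\gamma(-\ln I_G(x+\tfrac{Y}{r}))^{+}$; since $Y/r>0$ and $I_G$ is decreasing, $c_{*}^{\post}\le c_{*}^{\Mer}$ is immediate. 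For the pre/Merton consumption comparison I would first prove the pointwise identity $\tilde{V}_{\pre}'(y)-\tilde{V}_{\Mer}'(y)=C_2 m_- y^{m_--1}<0$ for $y>\overline{y}$, by matching the $y^{m_+}$ and logarithmic coefficients in \eqref{Pre_value} and \eqref{Mer_value} and using $C_2>0$, $m_-<0$ from \eqref{Pre_coefficient}; this yields $I_{\pre}(x)>I_{\Mer}(x)$, whence $c_{*}^{\pre}(x)\le c_{*}^{\Mer}(x)$ because both use $A(\underline{L})$. This settles (i).

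For the portfolio statement (ii), the scaling gives $\pi_{*}^{\post}(x)=\tfrac{\theta}{\sigma}P(x)$ and $\pi_{*}^{\Mer}(x)=\tfrac{\theta}{\sigma}P(x+\tfrac{Y}{r})$ with $P(w):=I_G(w)\,G''(I_G(w))$ the base risk exposure. I would then read off from the explicit base formulas that $u\mapsto uG''(u)$ is strictly decreasing, equivalently that $P$ is strictly increasing; as $Y/r>0$ this yields $\pi_{*}^{\post}(x)<\pi_{*}^{\Mer}(x)$ for every $x$, the inequality common to both lines of \eqref{Pi_compare}. The Merton/pre comparison I would split by wealth. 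For $x\le -\tilde{V}_{\Mer}'(A(\underline{L}))$ one has $I_{\Mer}(x)\ge A(\underline{L})$, hence $I_{\pre}(x)>I_{\Mer}(x)\ge A(\underline{L})$, and in this zero-consumption region both \eqref{Mer_value} and \eqref{Pre_value} give $y\tilde{V}''(y)=(1-m_-)(x+\tfrac{Y}{r})$, so the portfolios coincide. For $x>-\tilde{V}_{\Mer}'(A(\underline{L}))$ with pre still in zero consumption, $\pi_{*}^{\pre}(x)=\tfrac{\theta}{\sigma}(1-m_-)(x+\tfrac{Y}{r})$ while $\pi_{*}^{\Mer}(x)=\tfrac{\theta}{\sigma}P(x+\tfrac{Y}{r})$ with the base in positive consumption, and the bound $P(w)<(1-m_-)w$ (from $\tfrac{dP}{dw}<1-m_-$ off the boundary) gives the strict inequality.

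The main obstacle is the remaining regime in which \emph{both} pre and Merton consume. There I would use the wealth relation $x+\tilde{V}_{\pre}'(I_{\pre}(x))=0$ together with \eqref{Pre_value} to express the retirement contribution $D:=-C_2 m_- I_{\pre}(x)^{m_--1}>0$, and feed this into \eqref{portolio_pre_another}. After eliminating $D$ the two exposures take the matching form $\tfrac{\sigma}{\theta}\pi=-(m_+-1)\kappa\,e^{(1-m_+)\gamma c}+(\text{remainder})$ with $\kappa=\tfrac{1-m_-}{\gamma r(m_+-1)(m_+-m_-)}$ and $c$ the respective consumption, and the remainders combine so that the sign of $\pi_{*}^{\pre}(x)-\pi_{*}^{\Mer}(x)$ equals that of $f(c_{*}^{\Mer}(x))-f(c_{*}^{\pre}(x))$, where $f(c):=\tfrac{1}{\gamma(m_+-1)}e^{(1-m_+)\gamma c}+c$. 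Since $f'(c)=1-e^{(1-m_+)\gamma c}>0$ for $c>0$ and $c_{*}^{\pre}(x)<c_{*}^{\Mer}(x)$ strictly (from $I_{\pre}(x)>I_{\Mer}(x)$), this forces $\pi_{*}^{\pre}(x)>\pi_{*}^{\Mer}(x)$. I expect the delicate bookkeeping to be precisely this elimination of the retirement term and the cancellation that reduces everything to the monotonicity of the scalar function $f$.
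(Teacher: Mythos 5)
Your proposal is correct, and while your part (i) essentially retraces the paper's steps, your part (ii) takes a genuinely different route. For (i), your pointwise identity $\tilde{V}_{\pre}'(y)-\tilde{V}_{\Mer}'(y)=C_{2}m_{-}y^{m_{-}-1}<0$ and the income-shift comparison of post against Merton are the same computations the paper packages as \eqref{Inequality_1_equiv} and the decreasing function $F$. The payoff of your scaling identities $\tilde{V}_{\post}(y)=A(\overline{L})\,G(y/A(\overline{L}))$ and $\tilde{V}_{\Mer}(y)=\tfrac{Y}{r}y+A(\underline{L})\,G(y/A(\underline{L}))$ comes in (ii): writing $\pi_{*}^{\post}(x)=\tfrac{\theta}{\sigma}P(x)$ and $\pi_{*}^{\Mer}(x)=\tfrac{\theta}{\sigma}P(x+\tfrac{Y}{r})$ with $P(w)=I_{G}(w)G''(I_{G}(w))$ increasing gives $\pi_{*}^{\post}<\pi_{*}^{\Mer}$ in one stroke, and is in fact more robust than the paper's chain \eqref{Pi_inequality_1}: the middle inequality there, given that $y\tilde{V}_{\Mer}''(y)$ is decreasing, amounts to $I_{\post}(x)>I_{\Mer}(x)$, which does not follow from the cited \eqref{Inequality_1} and can fail (for small $Y$ one has $I_{\post}(x)=A(\overline{L})I_{G}(x)<A(\underline{L})I_{G}(x+\tfrac{Y}{r})=I_{\Mer}(x)$); your argument sidesteps that step entirely. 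For Merton versus pre in the regime where both consume --- the hard case --- the paper constructs the family $\tilde{W}(I,p)$ interpolating in the retirement boundary between Merton ($p=0$) and pre-retirement ($p=\overline{y}$), proves $\tfrac{\partial}{\partial p}\bigl(I(x,p)\tilde{W}_{II}(I(x,p),p)\bigr)>0$, and integrates over $[0,\overline{y}]$; you instead eliminate the $C_{2}$-term via $x+\tilde{V}_{\pre}'(I_{\pre}(x))=0$, so that both exposures become a common affine remainder plus $\tfrac{m_{-}-1}{r}f(c)$ evaluated at the respective consumption, with $f(c)=\tfrac{1}{\gamma(m_{+}-1)}e^{(1-m_{+})\gamma c}+c$, and conclude from $f'(c)=1-e^{(1-m_{+})\gamma c}>0$ for $c>0$ together with $c_{*}^{\pre}(x)<c_{*}^{\Mer}(x)$. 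I verified this reduction and the resulting sign identity $\sgn\bigl(\pi_{*}^{\pre}(x)-\pi_{*}^{\Mer}(x)\bigr)=\sgn\bigl(f(c_{*}^{\Mer}(x))-f(c_{*}^{\pre}(x))\bigr)$; it is exactly right (the coefficient you describe as $-(m_{+}-1)\kappa$ should be $\tfrac{m_{-}-1}{\gamma r(m_{+}-1)}$, a harmless slip since the function $f$ you end up with is the correct one), and your boundary case, $P(w)<(1-m_{-})w$ when pre-retirement has zero consumption but Merton consumes, also checks out since $\tfrac{dP}{dw}<1-m_{-}$ strictly off the boundary with equality of $P$ and $(1-m_{-})w$ at the boundary. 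In sum: your elimination argument is elementary, purely algebraic, and makes the economic mechanism transparent --- a higher consumption level forces a higher risk exposure through the single monotone function $f$ --- whereas the paper's homotopy is heavier machinery but yields the structural fact that the exposure is monotone in the retirement boundary $p$ itself, a fact the paper reuses (via $\tilde{C}'\geq 0$) in the proof of Proposition \ref{optimal_properties}.
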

\begin{proof} 
\noindent (i) It suffices to verify the following two inequalities:
\begin{align}  \label{Inequality_1}
  I_{\Mer}(x) < I_{\pre}(x),
\end{align}
and 
\begin{align}  \label{Inequality_2}
  \tfrac{A(\overline{L})}{I_{\post}(x)} < \tfrac{A(\overline{L})}{I_{\Mer}(x)}.
\end{align}
Since $\tilde{V}_{\Mer}''$, $\tilde{V}_{\pre}''$, $\tilde{V}_{\post}''$ are positive, \eqref{Inequality_1} is equivalent to 
\begin{align}  \label{Inequality_1_equiv}
  \tilde{V}_{\pre}' < \tilde{V}_{\Mer}'.
\end{align}
This equivalence follows from \eqref{Pre_value} and \eqref{Mer_value}, where $C_2 > 0$, thus proving \eqref{Inequality_1}.

To verify \eqref{Inequality_2}, let $F(X) := \frac{m_{-} - 1}{\gamma r (m_{+} - 1) (m_{+} - m_{-})} X^{1 - m_{+}} + \frac{\beta - r + \frac{\theta^{2}}{2}}{\gamma r^{2}} - \frac{\ln \left( X \right)}{\gamma r}$. From the condition $0 = x + \tilde{V}_{\post}'(I_{\post}(x)) = x + \tilde{V}_{\Mer}'(I_{\Mer}(x))$, we obtain $0 = x + F \left( \tfrac{A(\overline{L})}{I_{\post}(x)} \right) = x + \frac{Y}{r} + F \left( \tfrac{A(\underline{L})}{I_{\Mer}(x)} \right)$. Using \eqref{Derivative_sign_1} and $Y > 0$, inequality \eqref{Inequality_2} follows. Thus, we have proven \eqref{c_compare}. \medskip

\noindent (ii) It is straightforward to verify the following inequality:
\begin{align}  \label{Inequality_3}
  \tilde{V}_{\post}'' < \tilde{V}_{\Mer}'' < \tilde{V}_{\pre}''.
\end{align}
This follows from \eqref{Post_value}, \eqref{Pre_value}, and \eqref{Mer_value}, combined with $A(\overline{L}) < A(\underline{L})$ and $C_{2} > 0$. Additionally, since $\tilde{V}_{\Mer} \in C^{3}((0, \infty))$, we can compute
\begin{equation}  \label{Inequality_4}
  \begin{split}
  \tfrac{d}{d y} \left( y \tilde{V}_{\Mer}''(y) \right) = \begin{cases}
  \tfrac{(m_{-} - 1) (m_{+} - 1)}{\gamma r (m_{+} - m_{-}) A(\underline{L})^{m_{+} - 1}} y^{m_{+} - 2} \quad \text{ if } y \leq A(\underline{L}) \\
    \tfrac{(m_{-} - 1) (m_{+} - 1)}{\gamma r (m_{+} - m_{-}) A(\underline{L})^{m_{-} - 1}} y^{m_{-} - 2} \quad \text{ if } y > A(\underline{L})
  \end{cases} < 0.
  \end{split}
\end{equation}
This yields the conclusion
\begin{align}  \label{Pi_inequality_1}
  & I_{\post}(x) \tilde{V}_{\post}'' \left( I_{\post}(x) \right) < I_{\post}(x) \tilde{V}_{\Mer}'' \left( I_{\post}(x) \right) < I_{\Mer}(x) \tilde{V}_{\Mer}'' \left( I_{\Mer}(x) \right),
\end{align}
where the first inequality in \eqref{Pi_inequality_1} uses \eqref{Inequality_3}, and the second inequality uses \eqref{Inequality_1} and \eqref{Inequality_4}. Therefore, we conclude that $\pi_{*}^{\post}(x) < \pi_{*}^{\Mer}(x)$.

Next, let us consider \eqref{Post_value} and \eqref{Mer_value}. For $x \leq - \tilde{V}_{\Mer}'(A(\underline{L}))$, we get $\pi_{*}^{\Mer}(x) = \pi_{*}^{\pre}(x) = (1 - m_{-}) (x + \frac{Y}{r})$, which follows from $0 = x + \tilde{V}_{\pre}'(I_{\pre}(x)) = x + \tilde{V}_{\Mer}'(I_{\Mer}(x))$. Thus, we restrict our attention to $x > - \tilde{V}_{\Mer}'(A(\underline{L}))$ and define the following function:
\begin{align*}
  \tilde{W}(I, p) := \tfrac{Y I}{r} + \begin{cases}
   \tfrac{m_{-} - 1}{\gamma r m_{+} (m_{+} - 1) (m_{+} - m_{-}) A(\underline{L})^{m_{+} - 1}} I^{m_{+}} + \tilde{C}(p) I^{m_{-}} + \left( \frac{\beta - r + \frac{\theta^{2}}{2}}{\gamma r^{2}} - \frac{\ln (A(\underline{L})) + 1}{\gamma r} \right) I + \frac{I \ln (I)}{\gamma r} \\
   \qquad \qquad \qquad \qquad \qquad \qquad \qquad \qquad \qquad \qquad \qquad \quad \;\; \text{if } p < I \leq A(\underline{L}), \\
   C_{3} I^{m_{-}} - \frac{A(\underline{L})}{\gamma \beta} \qquad \qquad \qquad \qquad \qquad \qquad \qquad \qquad \quad \; \text{if } I > A(\underline{L}),
   \end{cases}
\end{align*}
where 
\begin{align*}
  \tilde{C}(p) := \tfrac{r [ \ln (A(\underline{L})) - \ln (p) ] - \left( \beta - 2 r + \frac{\theta^{2}}{2} \right)}{\gamma r^{2} p^{m_{-} - 1}} - C_{1} p^{m_{+} - m_{-}}.
\end{align*}
The above form is considered as the ODE
\begin{align*}
    \begin{cases}
  \tfrac{\theta^{2} I^{2}}{2} \cdot \tilde{W}_{I I}(I, p) + (\beta - r) I \tilde{W}_{I}(I, p) - \beta \tilde{W}(I, p) + \underline{U}(I) = 0 \quad \forall I > p, \\
  \tilde{W}(I, p) = 0 \quad \forall \, 0 < I \leq p,
    \end{cases}
\end{align*}
and the stochastic representation is 
\begin{align} \label{Tilde_W}
  \tilde{W}(I, p) = \tfrac{Y I}{r} + \mathbb{E}^{x} \left[ \int_{0}^{\inf \{ u > 0 : y_{u} \leq p \}} e^{- \beta s} \underline{U}(y_{s}) d s \right].
\end{align}
From this, we can find that $\tilde{W}(I, p) \in C^{2}((0, \infty) \setminus \{ p \}) \cap C((0, \infty))$ for fixed $p$. Thus, the following holds:
\begin{align*}
  \tilde{W}_{I}(I, p) & = \tfrac{Y}{r} + \begin{cases}
   \tfrac{m_{-} - 1}{\gamma r (m_{+} - 1) (m_{+} - m_{-}) A(\underline{L})^{m_{+} - 1}} I^{m_{+} - 1} + \tilde{C}(p) m_{-} I^{m_{-} - 1} + \frac{\beta - r + \frac{\theta^{2}}{2}}{\gamma r^{2}} + \frac{\ln (I) - \ln (A(\underline{L}))}{\gamma r} \quad \text{if } p < I \leq A(\underline{L}), \\
   \left( m_{-} \tilde{C}(p) + \tfrac{m_{+} - 1}{\gamma r (m_{-} - 1) (m_{+} - m_{-}) A(\underline{L})^{m_{-} - 1}} \right) I^{m_{-} - 1} \qquad \qquad \qquad \qquad \qquad \qquad \qquad \text{if } I > A(\underline{L})
   \end{cases}, \\
   \tilde{W}_{I I}(I, p) & = \begin{cases}
   \tfrac{1}{I} \left( \tfrac{m_{-} - 1}{\gamma r (m_{+} - m_{-}) A(\underline{L})^{m_{+} - 1}} I^{m_{+} - 1} + \tilde{C}(p) m_{-} (m_{-} - 1) I^{m_{-} - 1} + \frac{1}{\gamma r} \right) \quad \text{if } p < I \leq A(\underline{L}), \\
   \left( m_{-} (m_{-} - 1) \tilde{C}(p) + \tfrac{m_{+} - 1}{\gamma r (m_{+} - m_{-}) A(\underline{L})^{m_{-} - 1}} \right) I^{m_{-} - 2} \qquad \qquad \qquad \quad \text{if } I > A(\underline{L}).
   \end{cases}
\end{align*}
Let a function $(x, p) \mapsto I(x, p)$ be the inverse function of $- \tilde{W}_{I}$, which is well-defined because $\tilde{W}_{I I} > 0$ (as in the pre-retirement case). One can then check that $\tilde{V}_{\Mer}'(I(x, 0)) = \tilde{W}_{I}(I(x, 0), 0)$, $\tilde{V}_{\pre}'(I(x, \overline{y})) = \tilde{W}_{I}(I(x, \overline{y}), \overline{y})$, and $\tilde{C}'(p) \geq 0 = \tilde{C}(0)$ with positive measure of the set $\{ q : \tilde{C}'(q) > 0 \}$. This follows from \eqref{Tilde_W}, the strictly decreasing function  $p \mapsto \inf \{ u > 0 : y_{u} \leq p \}$, and $\tilde{C} \in C^{2}((0, \infty))$.

To show $\pi_{*}^{\Mer}(x) < \pi_{*}^{\pre}(x)$, it is enough to show that $I(x, \overline{y}) \tilde{W}_{I I}(I(x, \overline{y}), \overline{y}) > I(x, 0) \tilde{W}_{I I}(I(x, 0), 0)$. Differentiate the equation $x + \tilde{W}_{I}(I(x, p), p) = 0$ with respect to $p$ to get 
\begin{align}  \label{DI_equation}
  0 = \tilde{W}_{I I}(I(x, p), p) \tfrac{\partial I(x, p)}{\partial p} + \tilde{C}'(p) m_{-} I(x, p)^{m_{-} - 1}.
\end{align}

On the other hand, we have
\begin{align*}
  & \tfrac{\partial}{\partial p} \left( I(x, p) \tilde{W}_{I I}(I(x, p), p) \right) = \tfrac{\partial}{\partial I} \left( I \tilde{W}_{I I}(I, p) \right) \Big|_{I = I(x, p)} \cdot \tfrac{\partial I(x, p)}{\partial p} + \tilde{C}'(p) m_{-} (m_{-} - 1) I(x, p)^{m_{-} - 1} \\
  & \quad = m_{-} I(x, p)^{m_{-} - 1} \tilde{C}'(p) \cdot \tfrac{(m_{-} - 1) \tilde{W}_{I I}(I, p) - \frac{\partial}{\partial I} \left( I \tilde{W}_{I I}(I, p) \right)}{\tilde{W}_{I I}(I, p)} \Big|_{I = I(x, p)},
\end{align*}
where \eqref{DI_equation} is used in the second equality. Given that $I(x, p) < A(\underline{L})$ (since we restricted $x > - \tilde{V}_{\Mer}'(A(\underline{L}))$), the positive measure of the set $\{ q : m_{-} \tilde{C}'(q) < 0 \}$, $\tilde{W}_{I I}(I, p) > 0$, and 
\begin{align*}
  (m_{-} - 1) \tilde{W}_{I I}(I, p) \Big|_{I = I(x, p)} - \tfrac{\partial}{\partial I} \left( I \tilde{W}_{I I}(I, p) \right) \Big|_{I = I(x, p)} = \tfrac{m_{-} - 1}{\gamma r I(x, p)} \left( 1 - \left( \tfrac{I(x, p)}{A(\underline{L})} \right)^{m_{+} - 1} \right) < 0,
\end{align*}
we conclude
\begin{align*}
  I(x, \overline{y}) \tilde{W}_{I I}(I(x, \overline{y}), \overline{y}) - I(x, 0) \tilde{W}_{I I}(I(x, 0), 0) = \int_{0}^{\overline{y}} \tfrac{\partial}{\partial p} \left( I(x, p) \tilde{W}_{I I}(I(x, p), p) \right) d p > 0,
\end{align*}
which gives the desired result $ \pi_{*}^{\Mer}(x) < \pi_{*}^{\pre}(x) $, or equivalently, \eqref{Pi_compare}.
\end{proof}

\begin{figure}
		\begin{center}
			\includegraphics[width=0.45\textwidth]{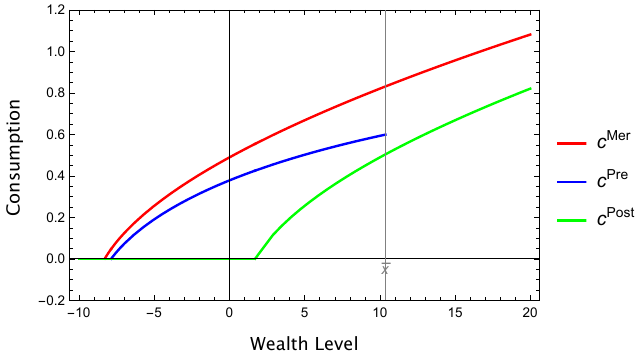} \quad
			\includegraphics[width=0.45\textwidth]{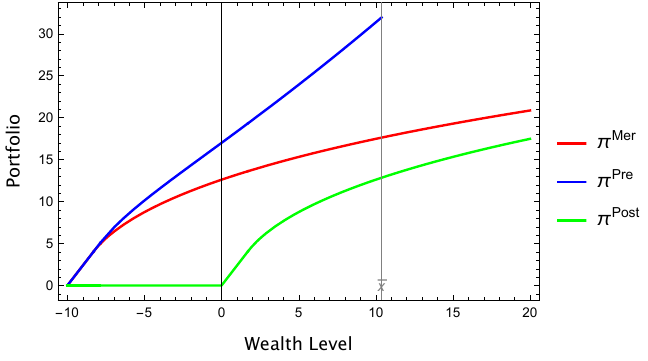}
	\end{center}
	 \caption{The two graphs describe optimal consumption and portfolio as a function of total wealth level. The parameters are $\beta = 0.03$, $r = 0.01$, $\mu = 0.07$, $\sigma = 0.2$, $\gamma = 3$, $\underline{L} = 0.3$, $\overline{L} = 0.5$, $Y = 0.1$, $\alpha = 0.4$.
		}
		\label{optimal_consumption_and_portfolio}
\end{figure}

\begin{remark}
When $Y$ and $\overline{L} - \underline{L}$ are small, $c_{*}^{\pre}(x) < c_{*}^{\post}(x)$, contrary to expectations. This arises due to reduced resources and uncertainty in the pre-retirement phase, as shown in Figure \ref{optimal_consumption_and_portfolio_1}.
\end{remark}

\begin{figure}
		\begin{center}
			\includegraphics[width=0.45\textwidth]{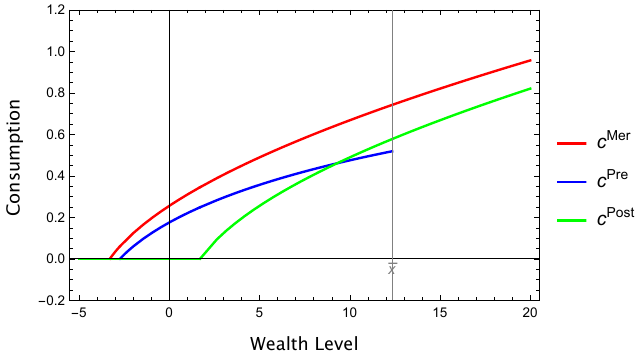}
	\end{center}
	 \caption{The graph describes optimal consumption as a function of total wealth level. The parameters are $\beta = 0.03$, $r = 0.01$, $\mu = 0.07$, $\sigma = 0.2$, $\gamma = 3$, $\underline{L} = 0.4$, $\overline{L} = 0.5$, $Y = 0.05$, $\alpha = 0.4$.
		}
		\label{optimal_consumption_and_portfolio_1}
\end{figure}


\section{Conclusions} \label{sec:conclusions}

This paper investigates an optimal control problem involving investment, consumption, and retirement decisions under exponential (CARA-type) utility. Our main contribution lies in analyzing the equivalent condition for no retirement and examining how income $ Y $ and the two leisure levels, $\underline{L}$ and $\overline{L}$, influence optimal portfolio, consumption, and retirement decisions. Specifically, the condition for no retirement is given by $Y \geq \frac{1 - \alpha}{\alpha} (\overline{L} - \underline{L})$, which defines the threshold income level needed to avoid retirement. The study reveals that the optimal portfolio and consumption decisions before retirement are independent of both income and the two leisure levels. In contrast, post-retirement decisions for the optimal portfolio and consumption demonstrate opposing monotonic relationships with leisure. Specifically, for a given $x$, $c_{*}^{\pre}(x)$ is non-decreasing in $\underline{L}$ and non-increasing in $\overline{L}$, while $\pi_{*}^{\pre}(x)$ shows the opposite trend, being non-increasing in $\underline{L}$ and non-decreasing in $\overline{L}$. These results underscore the distinct effects that leisure before and after retirement have on the agent's financial strategies. While $c_{*}^{\pre}(x)$ is shown to increase with income $Y$, the monotonic behavior of $\pi_{*}^{\pre}(x)$ with respect to $Y$ remains indeterminate. This ambiguity highlights the nuanced interplay between income and investment decisions, reflecting the complex nature of optimal control in this setting. These findings provide valuable insights into how individuals can balance their financial and leisure preferences across different life stages.

\bibliographystyle{siam}
\bibliography{draft}

\end{document}